\documentclass[10pt,reqno]{amsart}

\usepackage[a4paper,left=25mm,right=25mm,top=25mm,bottom=25mm,marginpar=25mm]{geometry}

\usepackage{amsfonts}
\usepackage{amsthm}
\usepackage{amssymb}
\usepackage{amsmath}
\usepackage{amscd}
\usepackage{bm}
\usepackage{mathtools}
\usepackage{color}
\usepackage{enumerate}
\usepackage{dsfont}
\usepackage[latin1]{inputenc}
\usepackage[displaymath,mathlines]{lineno}
\usepackage[mathscr]{eucal}
\numberwithin{equation}{section}
\usepackage{epstopdf} 
\usepackage{indentfirst}
\usepackage{lineno}
\usepackage[unicode=true]{hyperref}
\setpagewiselinenumbers
 
 \makeindex

\theoremstyle{plain}
\newtheorem{theorem}{Theorem}[section]

\newtheorem{proposition}[theorem]{Proposition}

\theoremstyle{definition}
\newtheorem{definition}[theorem]{Definition}

\DeclareMathOperator\supp{supp}

\title[Weak-strong uniqueness for Euler-Poisson systems]
      {The role of Riesz potentials in the weak-strong uniqueness for Euler-Poisson systems}

\author[Nuno J. Alves]{}

\keywords{Euler-Poisson system, Riesz potentials, weak-strong uniqueness}
\subjclass[2020]{
        35Q35, 
        35D30,
        35D35,
        76W05.
        } 

\email{nuno.januarioalves@kaust.edu.sa}

\begin{document}
\maketitle

\centerline{\scshape Nuno J. Alves}



\date{\today}

\begin{abstract}
In this article, the weak-strong uniqueness principle is proved for an Euler-Poisson system in the whole space, with initial data so that the strong solution exists. Some results on Riesz potentials are used to justify the considered weak formulation. Then, one follows the relative energy methodology and, in order to handle the solution of Poisson's equation, employs the theory of Riesz potentials.
\end{abstract}

\maketitle

\baselineskip 18pt

\section{Introduction}
This work is concerned with the weak-strong uniqueness principle for the following Euler-Poisson system in $\mathopen{]}0,T\mathclose{[}  \times \mathbb{R}^d,$ with $T < \infty$ and $d \in \mathbb{N}, \ d\geq 3:$
\begin{equation} \label{0EP}
 \begin{cases}
  \partial_t \rho + \nabla \cdot (\rho u) = 0 \\
  \partial_t (\rho u) + \nabla \cdot (\rho u \otimes u) + \nabla \rho^\gamma + \rho \nabla \phi = 0 \\\
  -\Delta \phi = \rho \ .
 \end{cases}
\end{equation} 
This system describes the evolution of a single-species fluid composed by charged carriers -- a basic model in plasma physics or semiconductor theory. The density of the fluid is represented by $\rho,$ its linear velocity is denoted by $u,$ and $\phi$ is the electric potential generated by the charged particles. The first equation is the continuity equation, which represents the conservation of mass or charge. The evolution of the momentum $\rho u$ is given by the second equation, with $\gamma > 1$ being the adiabatic exponent of the fluid. The last equation -- Poisson's equation -- determines the electric potential $\phi,$ which is represented by
\begin{equation} \label{phirepresent}
\phi(t,x)  = \frac{1}{c(d)}\int_{\mathbb{R}^d} \frac{\rho(t,y)}{|x-y|^{d-2}} \ dy \ ,
\end{equation}
where $c(d) = d(d-2)\mathcal{L}(B_1(0)) = \frac{d(d-2) \pi^{d/2}}{\Gamma(\frac{d}{2})+1}$ and $\mathcal{L}$ denotes the Lebesgue measure.  
Representation (\ref{phirepresent}) is a particular case of a more general concept -- Riesz potentials. \par 
The goal of this work is to establish the weak-strong uniqueness property of system (\ref{0EP}) using the machinery of Riesz potentials together with the relative energy method. The weak-strong uniqueness principle, with respect to certain classes of weak and strong solutions, says that if a weak solution coincides with the strong solution at the initial time, for initial data $(\rho_0,\rho_0 u_0, \phi_0)$ so that the strong solution exists, then they coincide for all times where both are defined. The notions of weak and strong solutions used for the current analysis are detailed in section \ref{sectionEP}. For those classes of solutions one obtains the following result:
\begin{theorem} \label{mainthm}
Let $(\rho, \rho u, \phi)$ be a dissipative weak solution of (\ref{0EP}) with $\gamma \geq \tfrac{2d}{d+1},$ and let $(\bar \rho, \bar \rho \bar u,\bar \phi)$ be a strong solution of (\ref{0EP}). If $(\rho_0, \rho_0 u_0) = (\bar \rho_0, \bar \rho_0 \bar u_0)$ then, for every $t \in [0,T[,$ 
\[\big(\rho(t), \rho(t) u(t), \phi(t)\big) = \big(\bar \rho(t), \bar \rho(t) \bar u(t), \bar \phi(t)\big) \ .\]
\end{theorem}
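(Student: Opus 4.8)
The plan is to carry out a relative energy argument. Introduce the relative energy functional
\[
\mathcal{E}(t) = \int_{\mathbb{R}^d}\Big(\tfrac12\,\rho\,|u-\bar u|^2 + h(\rho\,|\,\bar\rho)\Big)\,dx + \tfrac12\int_{\mathbb{R}^d}|\nabla\phi-\nabla\bar\phi|^2\,dx,
\]
where $h(\rho)=\rho^\gamma/(\gamma-1)$ is the internal energy density and $h(\rho\,|\,\bar\rho)=h(\rho)-h(\bar\rho)-h'(\bar\rho)(\rho-\bar\rho)$ its associated relative internal energy. Strict convexity of $h$ makes $\mathcal{E}\ge 0$ and forces $\rho=\bar\rho$ on the set where $\mathcal{E}=0$, while the potential term of $\mathcal{E}$ is finite because $-\Delta(\phi-\bar\phi)=\rho-\bar\rho$ and $\phi,\bar\phi$ are the Riesz potentials given by $(\ref{phirepresent})$. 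Since $(\rho_0,\rho_0u_0)=(\bar\rho_0,\bar\rho_0\bar u_0)$ forces $\rho_0=\bar\rho_0$, hence $\rho_0(u_0-\bar u_0)=0$ and $\phi_0=\bar\phi_0$, one has $\mathcal{E}(0)=0$. The goal is then to establish a Gr\"onwall-type inequality $\mathcal{E}(t)\le C\int_0^t\mathcal{E}(s)\,ds$, which yields $\mathcal{E}\equiv 0$.

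To derive this inequality I would combine three ingredients. First, the energy dissipation inequality satisfied by the dissipative weak solution, for the total energy $\int_{\mathbb{R}^d}\big(\tfrac12\rho|u|^2+h(\rho)\big)\,dx+\tfrac12\int_{\mathbb{R}^d}|\nabla\phi|^2\,dx$. Second, the weak formulation of the continuity and momentum equations, tested against the functions $\tfrac12|\bar u|^2$, $\bar u$, and $h'(\bar\rho)$ built from the strong solution, using that the latter is regular enough to serve as a test function and to be inserted pointwise into its own equations; this is also where one needs the Riesz-potential estimates guaranteeing that $\rho u$ and $\rho\nabla\phi$ belong to the Lebesgue spaces making these pairings licit, which is precisely what the hypothesis $\gamma\ge\tfrac{2d}{d+1}$ provides. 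Third, the evolution of the potential part of $\mathcal{E}$, whose key identity is
\[
\frac{d}{dt}\,\tfrac12\int_{\mathbb{R}^d}|\nabla\phi-\nabla\bar\phi|^2\,dx=\int_{\mathbb{R}^d}\nabla(\phi-\bar\phi)\cdot(\rho u-\bar\rho\bar u)\,dx,
\]
which follows formally from $-\Delta(\phi-\bar\phi)=\rho-\bar\rho$, the two continuity equations and an integration by parts, but which must be justified at weak-solution regularity by viewing $\partial_t\phi$ as a Riesz potential of $\nabla\cdot(\rho u)$ and invoking the Hardy--Littlewood--Sobolev inequality.

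Assembling these, the terms quadratic in $u-\bar u$ and linear in $\nabla\bar u$ collect into $-\int_{\mathbb{R}^d}\rho\,(u-\bar u)\otimes(u-\bar u):\nabla\bar u\,dx$, the pressure contributions into $-\int_{\mathbb{R}^d} p(\rho\,|\,\bar\rho)\,\nabla\cdot\bar u\,dx$ with $p(\rho)=\rho^\gamma$ and $p(\rho\,|\,\bar\rho)=p(\rho)-p(\bar\rho)-p'(\bar\rho)(\rho-\bar\rho)$, and the Poisson couplings into terms of the type $\int_{\mathbb{R}^d}\nabla(\phi-\bar\phi)\cdot\rho(u-\bar u)\,dx$ and $\int_{\mathbb{R}^d}(\rho-\bar\rho)\,\bar u\cdot\nabla\bar\phi\,dx$, plus remainders that cancel with the force term of the momentum equation tested by $\bar u$. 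Each term is then bounded by $C\,\mathcal{E}(s)$: the convective and pressure terms via $\|\nabla\bar u\|_{L^\infty}$, the uniform upper and lower bounds on $\bar\rho$, and the standard convexity lower bounds on $h(\rho\,|\,\bar\rho)$; the Poisson terms via $\|\nabla\bar\phi\|_{L^\infty}$, the estimate $\|\nabla(\phi-\bar\phi)\|_{L^2}^2\le 2\mathcal{E}$, Young's inequality, and once more the Hardy--Littlewood--Sobolev inequality together with interpolation to absorb the factor $\rho$. I expect this last point to be the main obstacle: closing the Poisson cross terms by $\mathcal{E}$ using only the integrability that the energy and the hypothesis $\gamma\ge\tfrac{2d}{d+1}$ afford, since $\nabla(\phi-\bar\phi)$ is controlled in $L^2$ by $\mathcal{E}$ but the nonlinear products require slightly more, supplied by the mapping properties of Riesz potentials. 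Finally, Gr\"onwall's inequality gives $\mathcal{E}\equiv 0$ on $[0,T[$, whence $\rho=\bar\rho$, $\rho u=\bar\rho\bar u$ and $\nabla\phi=\nabla\bar\phi$; since $\phi$ and $\bar\phi$ are both given by $(\ref{phirepresent})$ and therefore vanish at infinity, $\nabla(\phi-\bar\phi)=0$ forces $\phi=\bar\phi$, completing the proof.
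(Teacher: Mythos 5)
Your overall strategy — relative energy functional plus a Gr\"onwall closure — is precisely the paper's strategy, and your first two collected terms, $-\int \rho\,(u-\bar u)\otimes(u-\bar u):\nabla\bar u\,dx$ and $-\int p(\rho\,|\,\bar\rho)\,\nabla\cdot\bar u\,dx$, match the paper's $\mathcal{J}_1$ and $\mathcal{J}_2$. You also correctly locate the real difficulty in the Poisson coupling. However, the step you flag as ``the main obstacle'' is exactly where your sketch does not close, and neither of the tools you name (Hardy--Littlewood--Sobolev, interpolation ``to absorb the factor $\rho$'') is sufficient. The two Poisson terms you write down are both problematic under weak-solution regularity: $\int \nabla(\phi-\bar\phi)\cdot\rho(u-\bar u)\,dx$ requires a bound on $\int \rho|\nabla(\phi-\bar\phi)|^2\,dx$, which the relative energy does not control without a pointwise bound on $\rho$; and $\int(\rho-\bar\rho)\,\bar u\cdot\nabla\bar\phi\,dx$ would need $\|\rho-\bar\rho\|_1$ or $\|\rho-\bar\rho\|_2$, neither of which is dominated by $h(\rho\,|\,\bar\rho)$ in the regime $\tfrac{2d}{d+1}\le\gamma<2$. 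HLS/Riesz estimates alone only convert $\rho-\bar\rho$ bounds into $\nabla(\phi-\bar\phi)$ bounds; they do not manufacture the missing $L^2$ control.

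What the paper does differently, and what you are missing, is an additional rearrangement that removes $\rho-\bar\rho$ from the Poisson term entirely. After subtracting the tested weak formulations and using the second equality in (\ref{intparts}), the single surviving Poisson term is
\[
\mathcal{J}_3(t)=\int_0^t\int_{\mathbb{R}^d}(\rho-\bar\rho)\,\bar u\cdot\nabla(\phi-\bar\phi)\,dx\,d\tau,
\]
and the crucial ingredient is the integration by parts formula of Proposition \ref{intparts2prop}: writing $\eta=\rho-\bar\rho$, $\psi=\phi-\bar\phi$ so that $-\Delta\psi=\eta$, one has
\[
\int_{\mathbb{R}^d}\eta\,\nabla\psi\cdot\bar u\,dx = \int_{\mathbb{R}^d}\nabla\bar u:\nabla\psi\otimes\nabla\psi\,dx - \int_{\mathbb{R}^d}(\nabla\cdot\bar u)\,\tfrac12|\nabla\psi|^2\,dx,
\]
which trades the ill-controlled factor $\rho-\bar\rho$ for a purely quadratic expression in $\nabla(\phi-\bar\phi)$; this is then bounded by $C\big(\|\nabla\bar u\|_\infty+\|\nabla\cdot\bar u\|_\infty\big)\|\nabla(\phi-\bar\phi)\|_2^2\le C\,\Psi$. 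Justifying this identity at the regularity $\rho\in L^1\cap L^\gamma$ with $\gamma\ge\tfrac{2d}{d+1}$ is precisely where the Riesz-potential estimates of Section \ref{subsec22} and the density result in the Appendix come in. Without this divergence-structure integration by parts, the argument does not close, and that is the genuine gap in your proposal.
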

\par
The weak-strong uniqueness property lives behind the stability estimates on relaxation; see \cite{gasdynamics} for the weak-strong uniqueness of a Euler-Poisson system, for periodic solutions, as a consequence of its relaxation limit towards a Keller-Segel system. The weak-strong uniqueness principle has also been studied for the Euler-Poisson system with linear damping and confinement in a bounded smooth domain, for classes of measure-valued solutions; in this case being called measure-valued-strong uniqueness \cite{carrillomv}. Furthermore, it has as well been studied for a similar set of equations, the Navier-Stokes-Poisson system, for classes of weak solutions that are continuous in time with respect to the weak topology \cite{danica}. In those cases, the relative energy method was employed to achieve a stability estimate from which the weak-strong uniqueness follows. Here, that approach is also used for weak solutions that are continuous in time and defined in the whole space $\mathbb{R}^d$, which represents the novelty of the present work. Such weak solutions have not yet been proved to exist, and it is out of the scope of this manuscript to establish their existence. Nevertheless, one considers that notion of weak solution for its consistency with the a priori energy estimates of the system. The theory of Riesz potentials will be important in the justification of the weak formulation to what concerns the terms containing the electric potential $\phi$. The estimates on Riesz potentials also yield the thresholds for the adiabatic exponent $\gamma$. Those estimates are as well crucial to overcome the difficulties of the technical part of the proof of the main stability estimate, which is done via an integration by parts formula. The relative energy method is an efficient tool for the analysis of stability of systems of conservation laws; see \cite{dafermos} for an early work on the stability of thermoelastic fluids, and \cite{GLT} for more recent developments. This method has also been successful in establishing limiting processes; see \cite{gasdynamics} for the high-friction limit of single-species Euler flows towards gradient flows, and \cite{alves} for the relaxation limit of a two-species Euler-Poisson system with friction towards a bipolar drift-diffusion system.
\par 
Section \ref{sectionRP} presents all the results on Riesz potentials that will be important for the subsequent analysis. Those results are essentially a consequence of a classical result on Riesz potentials, Proposition \ref{Steinprop}, together with H\"{o}lder's inequality. \par
Section \ref{sectionEP} is devoted to considerations on the Euler-Poisson system (\ref{0EP}). First, one formally derives the energy and the relative energy identities for the system. The relative energy is the crucial concept for the analysis of the weak-strong uniqueness done here, as it serves as a yardstick for the comparison between the two solutions. Moreover, the notions of weak and strong solutions are given. Those will be used in the rigorous derivation of the relative energy inequality in the last section. 
\par
Section \ref{sectionwsu} is intended to establish the weak-strong uniqueness principle for system (\ref{0EP}) with respect to weak and strong solutions under the regularity specified in section \ref{sectionEP}. The crucial result, Theorem \ref{mainstabilitythm}, is an inequality satisfied by the relative energy from which Theorem \ref{mainthm} immediately follows. The most technical part of the proof of that result is handled using an integration by parts formula, Proposition \ref{intparts2prop}, which is proved using the results on Riesz potentials presented in subsection \ref{subsec22}.

\section{Riesz potentials} \label{sectionRP}

 Let $d \in \mathbb{N}$ and $0 < \alpha < d.$ The Riesz potential of degree $\alpha$ of a measurable function $f$ is the function $I_\alpha f$ given by 
 \[I_\alpha f(x) = \int_{\mathbb{R}^d} \frac{f(y)}{|x-y|^{d-\alpha}} \ dy \ .\] \\
Observe that the integrand part of (\ref{phirepresent}) coincides with the integrand part of the Riesz potential of degree $2$ of the density $\rho.$ To deal with the electric potential $\phi$ and with the electric field $-\nabla \phi$ one uses some integrability inequalities for Riesz potentials that will be derived here. The starting point to obtain those inequalities is the following classical result, proved in \cite{hedberg,stein}.
\vspace{2mm}

\begin{proposition} \label{Steinprop}
 Let $f \in L^p(\mathbb{R}^d),$ where $1 < p < d/\alpha$ with $0 < \alpha < d.$ Then
 \begin{equation} \label{Stein}
  ||I_\alpha f ||_{\frac{dp}{d-\alpha p}} \leq C(d,\alpha, p)\  ||f||_p \ .
 \end{equation}
\end{proposition}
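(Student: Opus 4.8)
The plan is to follow Hedberg's argument, which reduces inequality \eqref{Stein} to the $L^p$-boundedness of the Hardy--Littlewood maximal operator by means of a pointwise estimate obtained from splitting the Riesz kernel at an optimally chosen radius. (Since this is a classical fact, in the paper I simply invoke it with the references \cite{hedberg,stein}; what follows is the proof I would write if a self-contained argument were needed.)

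First I would fix $x \in \mathbb{R}^d$ and, for an arbitrary $R > 0$, split the integral defining $I_\alpha f(x)$ into the local part over the ball $\{|x-y| < R\}$ and the tail over $\{|x-y| \geq R\}$. For the local part I would decompose the ball into the dyadic annuli $\{2^{-k-1}R \leq |x-y| < 2^{-k}R\}$ with $k \geq 0$; on the $k$-th annulus the kernel $|x-y|^{\alpha - d}$ is comparable to $(2^{-k}R)^{\alpha - d}$, while $\int_{|x-y| < 2^{-k}R} |f(y)|\,dy$ is bounded by the measure of $B_{2^{-k}R}(x)$ times $Mf(x)$, that is, by a constant multiple of $(2^{-k}R)^d Mf(x)$, where $Mf$ denotes the Hardy--Littlewood maximal function. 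Summing the resulting geometric series (which converges because $\alpha > 0$) yields
\[
\Big| \int_{|x-y| < R} \frac{f(y)}{|x-y|^{d-\alpha}}\,dy \Big| \leq C(d,\alpha)\, R^\alpha\, Mf(x) \ .
\]
For the tail I would apply H\"older's inequality with exponents $p$ and $p' = p/(p-1)$; the hypothesis $p < d/\alpha$ is precisely what forces $(d-\alpha)p' > d$, so that $\int_{|x-y| \geq R} |x-y|^{-(d-\alpha)p'}\,dy$ converges and equals a constant times $R^{\alpha - d/p}$, whence
\[
\Big| \int_{|x-y| \geq R} \frac{f(y)}{|x-y|^{d-\alpha}}\,dy \Big| \leq C(d,\alpha,p)\, R^{\alpha - d/p}\, \| f \|_p \ .
\]

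Next I would optimize over $R$. Balancing the two contributions, i.e.\ choosing $R$ so that $R^\alpha Mf(x)$ and $R^{\alpha - d/p}\|f\|_p$ have the same order, which means $R \approx (\|f\|_p / Mf(x))^{p/d}$, gives the pointwise bound
\[
| I_\alpha f(x) | \leq C(d,\alpha,p)\, \big( Mf(x) \big)^{1 - \alpha p/d}\, \| f \|_p^{\alpha p/d} \ ,
\]
the case $Mf(x) = 0$ being trivial since then $f = 0$ almost everywhere. Finally I would raise this to the power $q = \tfrac{dp}{d - \alpha p}$, which is designed so that $q\,(1 - \alpha p/d) = p$, and integrate over $\mathbb{R}^d$ to obtain
\[
\| I_\alpha f \|_q^q \leq C\, \| f \|_p^{q \alpha p/d} \int_{\mathbb{R}^d} \big( Mf(x) \big)^p\,dx = C\, \| f \|_p^{q \alpha p/d}\, \| Mf \|_p^p \ .
\]
Since $p > 1$, the Hardy--Littlewood maximal inequality gives $\| Mf \|_p \leq C(d,p)\, \| f \|_p$; inserting this and using that $\tfrac{\alpha p}{d} + \tfrac{p}{q} = 1$ yields $\| I_\alpha f \|_q \leq C(d,\alpha,p)\, \| f \|_p$, which is \eqref{Stein}.

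The main obstacle is the maximal inequality itself: proving $\| Mf \|_p \leq C\, \| f \|_p$ for $1 < p \leq \infty$ rests on the weak-$(1,1)$ estimate for $M$, obtained via a Vitali-type covering lemma, combined with the trivial $L^\infty$ bound and the Marcinkiewicz interpolation theorem. Everything else above is elementary — a dyadic splitting, H\"older's inequality, and a one-variable minimization — so the whole statement reduces to that one classical input; an alternative route, also found in \cite{hedberg,stein}, bypasses $M$ by interpolating a weak-type endpoint bound for $I_\alpha$ directly.
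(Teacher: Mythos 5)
Your proof is correct: it is a faithful reproduction of Hedberg's argument (dyadic splitting of the kernel at a radius $R$, a pointwise bound $|I_\alpha f(x)| \leq C (Mf(x))^{1-\alpha p/d}\|f\|_p^{\alpha p/d}$ obtained by optimizing $R$, then the $L^p$ maximal inequality), and all the exponent arithmetic — $(d-\alpha)p' > d$ from $p < d/\alpha$, and $q(1-\alpha p/d) = p$ with $\alpha p/d + p/q = 1$ — checks out. The paper itself offers no proof and simply cites \cite{hedberg,stein} for this classical fact, and your argument is precisely the one found in \cite{hedberg}, so this matches the paper's intent exactly.
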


\vspace{2mm}

\subsection{Preliminaries}
This subsection contains preliminary results that serve to emphasize the simple idea of choosing the integrability exponent $p$ in (\ref{Stein}) in such a way that $\frac{dp}{d-\alpha p}$ is the H\"{o}lder conjugate of some other integrability exponent $q$.
\begin{proposition}
 Let $d \in \mathbb{N}, \ 0 < \alpha, \beta < d$ and let $f \in L^p(\mathbb{R}^d) , \ g \in L^q(\mathbb{R}^d)$ with $p,q \in \ ]1,\infty[ .$
\begin{enumerate}[(i)]
\item If $1 < p,q < \frac d \alpha $ are such that $\frac{1}{p}+\frac{1}{q} = 1 + \frac{\alpha}{d},$ then 
\[||(I_\alpha f) g ||_1 \leq C(d,\alpha, p)\  ||f||_p ||g||_q \ . \]
\item If $1 < p,q < \frac{d}{\alpha + \beta} $ are such that $\frac{1}{p}+\frac{1}{q} = 1 + \frac{\alpha + \beta}{d},$ then 
 \[ ||(I_\alpha f)(I_\beta g) ||_1 \leq C(d,\alpha, \beta, p)\  ||f||_p ||g||_q \ . \]
\end{enumerate} 
 \end{proposition}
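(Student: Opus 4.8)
The plan is to reduce both statements to Proposition~\ref{Steinprop} by inserting one application of H\"older's inequality whose exponents are tuned so that the integrability index demanded of the Riesz potential is exactly $\tfrac{dp}{d-\alpha p}$ (and, in part~(ii), also $\tfrac{dq}{d-\beta q}$). No deep ingredient beyond Proposition~\ref{Steinprop} is needed; the work is the arithmetic that the exponents and side conditions match.

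For part~(i), I would first estimate, by H\"older's inequality with the conjugate pair $q'=\tfrac{q}{q-1}$ and $q$,
\[
\|(I_\alpha f)\,g\|_1 \leq \|I_\alpha f\|_{q'}\,\|g\|_q .
\]
Then I would observe that the hypothesis $\tfrac1p+\tfrac1q=1+\tfrac\alpha d$ is exactly the statement that $q'=\tfrac{dp}{d-\alpha p}$: indeed $\tfrac{1}{q'}=1-\tfrac1q=\tfrac1p-\tfrac\alpha d=\tfrac{d-\alpha p}{dp}$. Since $1<p<\tfrac d\alpha$, Proposition~\ref{Steinprop} applies and gives $\|I_\alpha f\|_{q'}\leq C(d,\alpha,p)\,\|f\|_p$ (in particular $I_\alpha f$ is finite a.e., which makes the pointwise product meaningful); combining the two bounds yields the claim. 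One should record along the way that $\tfrac{d-\alpha p}{dp}\in(0,1)$, i.e. $q'\in(1,\infty)$, which is automatic from $1<p<d/\alpha$ and legitimizes the use of H\"older.

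For part~(ii), the only change is that I would split the product with H\"older's inequality using the pair $r=\tfrac{dp}{d-\alpha p}$ and $r'=\tfrac{dq}{d-\beta q}$, after checking that these are genuinely conjugate: $\tfrac1r+\tfrac1{r'}=\bigl(\tfrac1p-\tfrac\alpha d\bigr)+\bigl(\tfrac1q-\tfrac\beta d\bigr)=\bigl(\tfrac1p+\tfrac1q\bigr)-\tfrac{\alpha+\beta}{d}=1$ by hypothesis. This gives
\[
\|(I_\alpha f)(I_\beta g)\|_1 \leq \|I_\alpha f\|_{r}\,\|I_\beta g\|_{r'} .
\]
Since $p<\tfrac{d}{\alpha+\beta}\leq\tfrac d\alpha$ and $q<\tfrac{d}{\alpha+\beta}\leq\tfrac d\beta$, Proposition~\ref{Steinprop} applies to each factor separately, producing $\|I_\alpha f\|_r\leq C(d,\alpha,p)\,\|f\|_p$ and $\|I_\beta g\|_{r'}\leq C(d,\beta,q)\,\|g\|_q$; multiplying these finishes the proof.

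The argument is essentially bookkeeping, so the only ``obstacle'' is ensuring that the side conditions line up: one must confirm that the H\"older exponents produced above lie strictly between $1$ and $\infty$, and that $p$ (resp.\ $q$) stays below the threshold $d/\alpha$ (resp.\ $d/\beta$) required by Proposition~\ref{Steinprop}. Both are immediate from the stated ranges $1<p,q<d/\alpha$ in~(i) and $1<p,q<d/(\alpha+\beta)$ in~(ii), together with $\alpha,\beta>0$; the constant in the conclusion is the one from Proposition~\ref{Steinprop}, since H\"older's inequality contributes constant $1$.
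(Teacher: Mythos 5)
Your proposal is correct and follows the same route as the paper: a single application of H\"older's inequality with the conjugate exponent tuned so that Proposition~\ref{Steinprop} applies to each Riesz potential factor, together with the elementary exponent arithmetic checking that the side conditions on $p$ and $q$ are equivalent to the needed range for Proposition~\ref{Steinprop}. The bookkeeping you carry out (including $q'=\tfrac{dp}{d-\alpha p}$, $r$ and $r'$ being conjugate, and the thresholds $p<d/\alpha$, $q<d/\beta$) matches the paper's computation exactly.
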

\begin{proof}~
\begin{enumerate}[(i)]
 \item First observe that 
 \[\dfrac{1}{p}+\dfrac{1}{q} = 1 + \dfrac{\alpha}{d} \ \Leftrightarrow \ q^\prime = \frac{q}{q-1} = \frac{dp}{d- \alpha p} \ , \]
 and \[1 < p \ \Leftrightarrow \ q < \frac{d}{\alpha} \ , \ \ \ p < \frac{d}{\alpha} \ \Leftrightarrow \ 1 < q \ . \]
 Thus, H\"{o}lder inequality together with (\ref{Stein}) yields
 \begin{equation*}
  \begin{split}
   ||(I_\alpha f) g ||_1 \ &\leq \ ||I_\alpha f ||_{q^\prime} \ ||g||_q \\ & =  \ ||I_\alpha f ||_{\frac{dp}{d- \alpha p}} \ ||g||_q \\
   & \leq \ C(d, \alpha, p) \ ||f||_p \ ||g||_q \ .
  \end{split}
\end{equation*}
\item Similarly,
 \[ \dfrac{1}{p}+\dfrac{1}{q} = 1 + \dfrac{\alpha + \beta}{d} \ \  \Leftrightarrow \ \ s = \dfrac{dp}{d-\alpha p} \ \text{and} \ s^\prime = \dfrac{s}{s-1} = \dfrac{dq}{d-\beta q} \ ,\]
and 
\[1 < p \ \Leftrightarrow \ q < \frac{d}{\alpha+\beta} \ , \ \ \ p < \frac{d}{\alpha+\beta} \ \Leftrightarrow \ 1 < q \ . \]
Thus, H\"{o}lder's inequality together with (\ref{Stein}) gives
\begin{equation*}
  \begin{split}
   ||(I_\alpha f) (I_\beta g) ||_1 \ &\leq \ ||I_\alpha f ||_{s} \ ||I_\beta g||_{s^\prime} \\ & =  \ ||I_\alpha f ||_{\frac{dp}{d- \alpha p}} \ ||I_\beta g||_{\frac{dq}{d- \beta q}} \\
   & \leq \ C(d, \alpha, p) \ C(d,\beta,q) \ ||f||_p \ ||g||_q \ .
  \end{split}
\end{equation*}
\end{enumerate}
\end{proof}

\subsection{Riesz potentials on \texorpdfstring{$\bm{L^1(\mathbb{R}^d) \cap L^\gamma(\mathbb{R}^d)}$}{L\unichar{"00B9}(R\unichar{"1D48})\unichar{"22C2}L\unichar{"1D5E}(R\unichar{"1D48})}} \label{subsec22}
The results presented here are obtained combining the same idea of the previous subsection with the interpolation of Lebesgue measurable spaces. These results will be crucial for the proof of Proposition \ref{intparts2prop}.
\begin{proposition}
 Let $d \in \mathbb{N}, \ 0 < \alpha \leq \beta < d/2$ and $\rho, \eta \in L^1(\mathbb{R}^d) \cap L^\gamma(\mathbb{R}^d)$ with $\gamma \geq \frac{2d}{d+2 \alpha} .$ Then 
\begin{equation} \label{Irho2}
||I_\alpha \rho||_2 \leq C(d,\alpha) \ ||\rho ||_{\frac{2d}{d+2 \alpha}} \ ,
\end{equation} 
\begin{equation} \label{IetaIrho1}
 ||(I_\beta \eta)(I_\alpha \rho)||_1 \leq C(d,\alpha,\beta) \ ||\eta ||_{\frac{2d}{d+2 \beta}} \ ||\rho ||_{\frac{2d}{d+2 \alpha}} \ .
 \end{equation}
\end{proposition}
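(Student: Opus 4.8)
The plan is to deduce both estimates directly from Proposition \ref{Steinprop} after choosing the integrability exponent $p$ in (\ref{Stein}) so that the target exponent $\frac{dp}{d-\alpha p}$ equals $2$, and then to use interpolation in $L^1(\mathbb{R}^d) \cap L^\gamma(\mathbb{R}^d)$ to justify that the densities actually lie in the required Lebesgue space. This is the same ``tune the exponent'' idea as in the previous subsection, now specialized to the self-dual case.

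First I would prove (\ref{Irho2}). Set $p = \frac{2d}{d+2\alpha}$. A short computation gives $d - \alpha p = \frac{d^2}{d+2\alpha}$, hence $\frac{dp}{d-\alpha p} = 2$. Since $0 < \alpha < d/2$, one checks that $1 < p < d/\alpha$, so Proposition \ref{Steinprop} applies and yields $\|I_\alpha \rho\|_2 \le C(d,\alpha)\,\|\rho\|_p$ for every $\rho \in L^p(\mathbb{R}^d)$. It then remains to note that the hypotheses $\rho \in L^1 \cap L^\gamma$ and $\gamma \ge p$ (together with $p > 1$) place $p$ between $1$ and $\gamma$, so by interpolation of Lebesgue spaces $\rho \in L^p(\mathbb{R}^d)$, which gives (\ref{Irho2}).

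Next, for (\ref{IetaIrho1}) I would apply the Cauchy--Schwarz inequality in $L^2(\mathbb{R}^d)$,
\[
\|(I_\beta \eta)(I_\alpha \rho)\|_1 \le \|I_\beta \eta\|_2 \,\|I_\alpha \rho\|_2 \ ,
\]
and then invoke (\ref{Irho2}) twice: once with degree $\alpha$ applied to $\rho$, and once with degree $\beta$ applied to $\eta$. This bounds the right-hand side by $C(d,\alpha)\,C(d,\beta)\,\|\eta\|_{\frac{2d}{d+2\beta}}\|\rho\|_{\frac{2d}{d+2\alpha}}$. To apply (\ref{Irho2}) to $\eta$ I need $\beta < d/2$, which is assumed, and $\eta \in L^{\frac{2d}{d+2\beta}}(\mathbb{R}^d)$; the latter follows once more from interpolation, since $\alpha \le \beta$ forces $\frac{2d}{d+2\beta} \le \frac{2d}{d+2\alpha} \le \gamma$, so the exponent $\frac{2d}{d+2\beta}$ lies in $[1,\gamma]$.

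The argument is essentially bookkeeping, and I do not expect a genuine obstacle; the only point demanding care is the chain of exponent inequalities — in particular, that the single assumption $\gamma \ge \frac{2d}{d+2\alpha}$ combined with $0 < \alpha \le \beta < d/2$ is exactly what guarantees that both $\rho$ and $\eta$ belong to the Lebesgue spaces required by Proposition \ref{Steinprop}.
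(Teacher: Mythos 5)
Your proof is correct and follows essentially the same route as the paper: choose $p=\frac{2d}{d+2\alpha}$ (and $q=\frac{2d}{d+2\beta}$) so that the Riesz estimate lands in $L^2$, justify membership in $L^p$ and $L^q$ by interpolation in $L^1\cap L^\gamma$, and obtain the second inequality via Cauchy--Schwarz (i.e.\ H\"older with exponent $2$) combined with the first. You also correctly identify that the ordering $\alpha\le\beta$ is what makes $\frac{2d}{d+2\beta}\le\frac{2d}{d+2\alpha}\le\gamma$, a point the paper leaves implicit.
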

\begin{proof}
 Set $p = \frac{2d}{d+2 \alpha}, \ q = \frac{2d}{d+2 \beta}$ and note that $\rho, \eta \in L^p(\mathbb{R}^d) \cap L^q(\mathbb{R}^d).$ Moreover, a simple calculation yields
$$2 = \frac{dp}{d-\alpha p} = \frac{dq}{d-\beta q} \ .$$
Then, (\ref{Irho2}) easily follows from (\ref{Stein}), and (\ref{IetaIrho1}) follows from (\ref{Irho2}) together with H\"{o}lder's inequality.
 \end{proof}

\begin{proposition}
 Let $d \in \mathbb{N}, \ 0 < \alpha < d$ and $\rho, \eta \in L^1(\mathbb{R}^d) \cap L^\gamma(\mathbb{R}^d)$ with $\gamma \geq \frac{2d}{d+ \alpha} .$ Then 
\begin{equation} \label{etaIrho1bound}
||\eta I_\alpha \rho||_1 \leq C(d,\alpha) \ ||\eta ||_{\frac{2d}{d+ \alpha}} ||\rho ||_{\frac{2d}{d+ \alpha}} \ .
\end{equation} 
\end{proposition}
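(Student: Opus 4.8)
The plan is to derive \eqref{etaIrho1bound} directly from Proposition~\ref{Steinprop} and a single application of H\"older's inequality, in the same spirit as part (i) of the preliminary proposition of this section specialized to $q=p$. Set $p=\frac{2d}{d+\alpha}$. The standing hypothesis $0<\alpha<d$ gives at once the two constraints needed to invoke Proposition~\ref{Steinprop}: $p>1\iff 2d>d+\alpha\iff\alpha<d$, and $p<\frac d\alpha\iff 2\alpha<d+\alpha\iff\alpha<d$. Moreover $\gamma\geq\frac{2d}{d+\alpha}=p$ means $1\leq p\leq\gamma$, so interpolation of Lebesgue spaces yields $L^1(\mathbb{R}^d)\cap L^\gamma(\mathbb{R}^d)\subset L^p(\mathbb{R}^d)$; in particular $\rho,\eta\in L^p(\mathbb{R}^d)$, so the right-hand side of \eqref{etaIrho1bound} is finite and the estimate is meaningful.

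Next I would split the product $\eta\, I_\alpha\rho$ by H\"older's inequality with the conjugate pair $p$ and $p'=\frac{p}{p-1}$. A direct computation gives $p-1=\frac{d-\alpha}{d+\alpha}$, hence $p'=\frac{2d}{d-\alpha}$, so that
\[
\|\eta\, I_\alpha\rho\|_1\ \leq\ \|\eta\|_{p}\,\|I_\alpha\rho\|_{p'}\ =\ \|\eta\|_{\frac{2d}{d+\alpha}}\,\|I_\alpha\rho\|_{\frac{2d}{d-\alpha}}\ .
\]
It then remains to bound $\|I_\alpha\rho\|_{p'}$. Here I would apply Proposition~\ref{Steinprop} with the exponent $p$: since $d-\alpha p=\frac{d(d-\alpha)}{d+\alpha}$ one has $\frac{dp}{d-\alpha p}=\frac{2d}{d-\alpha}=p'$, so \eqref{Stein} gives $\|I_\alpha\rho\|_{p'}\leq C(d,\alpha,p)\,\|\rho\|_p$, and since $p$ is determined by $d$ and $\alpha$ the constant may be written $C(d,\alpha)$. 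Inserting this into the previous display yields \eqref{etaIrho1bound}.

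There is essentially no genuine obstacle here beyond the elementary exponent arithmetic; the whole point, already used repeatedly in this section, is that the choice $p=\frac{2d}{d+\alpha}$ is exactly the one making $\frac{dp}{d-\alpha p}$ equal to the H\"older conjugate of $p$ itself (equivalently, it is the solution of $\frac1p+\frac1p=1+\frac\alpha d$). The only care required is to verify, as done above, that the three requirements driving the argument --- $p>1$ and $p<d/\alpha$ for Proposition~\ref{Steinprop}, and $p\leq\gamma$ for the interpolation step placing $\rho,\eta$ in $L^p$ --- all follow simultaneously from the hypotheses $0<\alpha<d$ and $\gamma\geq\frac{2d}{d+\alpha}$.
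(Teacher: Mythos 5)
Your proof is correct and follows essentially the same route as the paper: set $p=\frac{2d}{d+\alpha}$, note $\rho,\eta\in L^p$, verify that the Hölder conjugate $p'=\frac{2d}{d-\alpha}$ coincides with $\frac{dp}{d-\alpha p}$, and then combine Hölder's inequality with Proposition~\ref{Steinprop}. The only difference is that you spell out the checks $1<p<d/\alpha$ and the interpolation step, which the paper leaves implicit.
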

\begin{proof}
 Set $\gamma_0 = \frac{2d}{d+ \alpha}$ and note that $\rho, \eta \in L^{\gamma_0}(\mathbb{R}^d).$ Moreover, a simple calculation yields
 $$\gamma_0^\prime = \frac{\gamma_0}{\gamma_0 - 1} = \frac{d \gamma_0}{d - \alpha \gamma_0} \ ,$$
 whence, by H\"{o}lder's inequality and (\ref{Stein}),
 $$||\eta I_\alpha \rho||_1 \leq  ||\eta ||_{\gamma_0} ||I_\alpha\rho ||_{\gamma_0^\prime} \leq C(d,\alpha) ||\eta ||_{\gamma_0} ||\rho ||_{\gamma_0} \ . $$
 
\end{proof}

\subsection{Riesz potentials on \texorpdfstring{$ \bm{C\big([0,T[;L^1(\mathbb{R}^d)\cap L^\gamma(\mathbb{R}^d)\big)}$}{C([0,T[;L^\unichar{"00B9}(R^\unichar{"1D48})\unichar{"22C2}L\unichar{"1D5E}(R\unichar{"1D48}))}}

As a consequence of the previous results, one has that the continuity in time of $\rho$ is propagated to $I_\alpha \rho$ and $\rho I_\alpha \rho.$ Precisely:

\begin{proposition} \label{rieszCL2}
 Let $d \in \mathbb{N}, \ 0 < \alpha < d/2$ and $\rho \in C\big([0,T[;L^1(\mathbb{R}^d) \cap L^\gamma(\mathbb{R}^d)\big)$ with $\gamma \geq \frac{2d}{d+2 \alpha} .$ Then 
 $I_\alpha \rho \in C\big([0,T[; L^2(\mathbb{R}^d)\big). $ 
\end{proposition}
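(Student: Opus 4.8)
The plan is to show that $I_\alpha\rho$ is continuous from $[0,T[$ into $L^2(\mathbb{R}^d)$ by exploiting the linearity of the Riesz potential together with the bound (\ref{Irho2}) from the previous subsection. The starting observation is that for $0<\alpha<d/2$ one has $\tfrac{2d}{d+2\alpha}<2\le\gamma$, so by interpolation between $L^1$ and $L^\gamma$ the exponent $p:=\tfrac{2d}{d+2\alpha}$ is admissible: $\rho(t)\in L^1(\mathbb{R}^d)\cap L^\gamma(\mathbb{R}^d)\subset L^p(\mathbb{R}^d)$ for every $t$, and in fact the map $t\mapsto\rho(t)$ is continuous into $L^p(\mathbb{R}^d)$ because $\|\rho(t)-\rho(s)\|_p\le\|\rho(t)-\rho(s)\|_1^{1-\theta}\,\|\rho(t)-\rho(s)\|_\gamma^\theta$ for the appropriate $\theta\in(0,1)$, and each factor on the right tends to $0$ as $s\to t$ by the hypothesis $\rho\in C([0,T[;L^1\cap L^\gamma)$.

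Next I would use linearity of $I_\alpha$ and the inequality (\ref{Irho2}) applied to the difference $\rho(t)-\rho(s)$: since $I_\alpha\rho(t)-I_\alpha\rho(s)=I_\alpha(\rho(t)-\rho(s))$, we get
\[
\|I_\alpha\rho(t)-I_\alpha\rho(s)\|_2 \;=\; \|I_\alpha(\rho(t)-\rho(s))\|_2 \;\le\; C(d,\alpha)\,\|\rho(t)-\rho(s)\|_{\frac{2d}{d+2\alpha}} \;\longrightarrow\; 0
\]
as $s\to t$, by the first step. This simultaneously shows $I_\alpha\rho(t)\in L^2(\mathbb{R}^d)$ for each fixed $t$ (taking $\rho(s)=0$, or just applying (\ref{Irho2}) directly to $\rho(t)$) and that $t\mapsto I_\alpha\rho(t)$ is continuous into $L^2(\mathbb{R}^d)$, which is exactly the claim.

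I do not expect a genuine obstacle here; the only point requiring a word of care is the interpolation step establishing continuity of $t\mapsto\rho(t)$ into $L^p$, where one must check that $p$ lies between $1$ and $\gamma$ — this is where the standing assumption $\gamma\ge\tfrac{2d}{d+2\alpha}$ is used, together with $\alpha<d/2$ which guarantees $p<2\le\gamma$ and $p>1$. Everything else is a direct consequence of the linearity of $I_\alpha$ and the a priori bound (\ref{Irho2}), so the proof is short.
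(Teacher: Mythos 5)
Your proof is correct and follows essentially the same route as the paper's: observe $\rho\in C\big([0,T[;L^{p}(\mathbb{R}^d)\big)$ with $p=\tfrac{2d}{d+2\alpha}$, then use linearity of $I_\alpha$ together with the bound (\ref{Irho2}) applied to $\rho(t)-\rho(s)$. One small slip in your justification: the hypothesis gives $\gamma\geq\tfrac{2d}{d+2\alpha}=p$, not $\gamma\geq 2$, so the chain ``$p<2\le\gamma$'' is not warranted in general; what you actually need and do have is $1<p\le\gamma$ (the lower bound from $\alpha<d/2$, the upper bound directly from the standing assumption), which is exactly what makes the $L^1$--$L^\gamma$ interpolation apply.
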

\begin{proof}
 Set $p = \frac{2d}{d+2 \alpha}$ and observe that $\rho \in C\big([0,T[;L^p(\mathbb{R}^d) \big).$ Let $\varepsilon > 0, \ t \in [0,T[$ and set $\tilde{\varepsilon} = \frac \varepsilon  C$ where $C = C(d, \alpha)$ is as in (\ref{Irho2}). There exists $\delta = \delta(\varepsilon,t) > 0$  such that if $s \in [0,T[$ satisfies $|t-s| < \delta$ then $||\rho(t) - \rho(s) ||_p < \tilde{\varepsilon}.$ Thus, whenever $|t-s| < \delta$ one has 
 \begin{equation*}
 ||I_\alpha \rho(t)-I_\alpha \rho(s)  ||_2 = ||I_\alpha (\rho(t)- \rho(s) ) ||_2 \leq C ||\rho(t) - \rho(s) ||_p < C\tilde{\varepsilon}= \varepsilon \ ,
 \end{equation*}
 which completes the proof.
\end{proof}

\begin{proposition} \label{rhoIrhoCL1}
 Let $d \in \mathbb{N}, \ 0 < \alpha < d$ and $\rho \in C\big([0,T[;L^1(\mathbb{R}^d) \cap L^\gamma(\mathbb{R}^d)\big)$ with $\gamma \geq \frac{2d}{d+\alpha} .$ Then 
 $\rho I_\alpha \rho \in C\big([0,T[; L^1(\mathbb{R}^d)\big). $ 
\end{proposition}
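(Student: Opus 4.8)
The plan is to run exactly the argument used for Proposition \ref{rieszCL2}, but built on the bilinear bound (\ref{etaIrho1bound}) rather than on the linear bound (\ref{Irho2}). Set $\gamma_0 = \frac{2d}{d+\alpha}$. Since $0 < \alpha < d$ one checks immediately that $\gamma_0 \in \mathopen{]}1,2\mathclose{[}$, and since $\gamma \geq \gamma_0$ the exponent $\gamma_0$ sits in the interpolation window $[1,\gamma]$; in particular $L^1(\mathbb{R}^d) \cap L^\gamma(\mathbb{R}^d) \subset L^{\gamma_0}(\mathbb{R}^d)$. The first step is to upgrade the hypothesis to $\rho \in C\big([0,T[; L^{\gamma_0}(\mathbb{R}^d)\big)$: for the $\theta \in [0,1]$ determined by $\frac{1}{\gamma_0} = 1-\theta + \frac{\theta}{\gamma}$, the standard interpolation inequality gives $\|\rho(t)-\rho(s)\|_{\gamma_0} \leq \|\rho(t)-\rho(s)\|_1^{1-\theta}\,\|\rho(t)-\rho(s)\|_\gamma^{\theta}$, and the right-hand side tends to $0$ as $s \to t$ by the assumed continuity of $\rho$ in $L^1$ and in $L^\gamma$.

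Next, fix $t \in [0,T[$ and, for $s \in [0,T[$, use the linearity of $I_\alpha$ to write the bilinear difference as
\[
\rho(t) I_\alpha \rho(t) - \rho(s) I_\alpha \rho(s) = \big(\rho(t)-\rho(s)\big)\, I_\alpha \rho(t) + \rho(s)\, I_\alpha\big(\rho(t)-\rho(s)\big) \ .
\]
Each of $\rho(t)$, $\rho(s)$ and $\rho(t)-\rho(s)$ belongs to $L^1(\mathbb{R}^d) \cap L^\gamma(\mathbb{R}^d)$, so (\ref{etaIrho1bound}) applies to both summands and yields
\[
\big\|\rho(t) I_\alpha \rho(t) - \rho(s) I_\alpha \rho(s)\big\|_1 \leq C(d,\alpha)\,\Big(\|\rho(t)\|_{\gamma_0} + \|\rho(s)\|_{\gamma_0}\Big)\,\|\rho(t)-\rho(s)\|_{\gamma_0} \ .
\]
Finally, by the $L^{\gamma_0}$-continuity just established, $\|\rho(s)\|_{\gamma_0}$ stays bounded for $s$ in a neighbourhood of $t$ while $\|\rho(t)-\rho(s)\|_{\gamma_0} \to 0$ as $s \to t$; hence the right-hand side vanishes in the limit, proving continuity of $\rho I_\alpha \rho$ at the arbitrary point $t$.

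I do not anticipate a genuine obstacle: the only two things to verify are that $\gamma_0 = \frac{2d}{d+\alpha}$ really lies in the admissible range $[1,\gamma]$ (so that both the interpolation step and the estimate (\ref{etaIrho1bound}) are legitimate), and that the bilinear splitting above is valid — both of which are immediate from $0 < \alpha < d$ and $\gamma \geq \frac{2d}{d+\alpha}$. The mildly delicate point, if any, is simply to notice that one must pass through the intermediate exponent $\gamma_0$, rather than working directly in $L^1$ or $L^\gamma$, because (\ref{etaIrho1bound}) forces the \emph{same} exponent $\frac{2d}{d+\alpha}$ on both factors.
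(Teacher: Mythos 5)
Your proof is correct and is essentially the paper's own argument: the same intermediate exponent $\gamma_0 = \tfrac{2d}{d+\alpha}$, the same bilinear splitting $\rho(t)I_\alpha\rho(t) - \rho(s)I_\alpha\rho(s) = (\rho(t)-\rho(s))I_\alpha\rho(t) + \rho(s)I_\alpha(\rho(t)-\rho(s))$, and the same application of (\ref{etaIrho1bound}) to both pieces. The only cosmetic difference is that you spell out the interpolation step giving $\rho\in C([0,T[;L^{\gamma_0})$ and argue by a limit, whereas the paper states that inclusion without comment and runs the $\varepsilon$--$\delta$ bookkeeping explicitly.
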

\begin{proof}
 Set $\gamma_0 = \frac{2d}{d+\alpha}$ and note that $\rho \in C\big([0,T[;L^{\gamma_0}(\mathbb{R}^d) \big).$ Let $\varepsilon > 0, \ t \in [0,T[,$ and choose $\tilde{\varepsilon} > 0$ so that $C\tilde{\varepsilon}^2+2C||\rho(t)||_{\gamma_0} \tilde{\varepsilon} < \varepsilon,$ where $C=C(d,\alpha)$ is as in (\ref{etaIrho1bound}). There exists $\delta = \delta(\varepsilon,t)> 0$ such that whenever $s \in [0,T[$ satisfies $|t-s|<\delta$ one has $||\rho(t)-\rho(s) ||_{\gamma_0} < \tilde{\varepsilon}. $ Thus, using (\ref{etaIrho1bound}), for $|t-s|<\delta$ one has
\begin{equation*}
\begin{split}
||\rho(t)I_\alpha \rho(t) - \rho(s) I_\alpha \rho(s) ||_1  & \leq  ||(\rho(t)-\rho(s))I_\alpha \rho(t)||_1 + ||\rho(s)I_\alpha (\rho(t) - \rho(s))||_1\\
  & \leq C ||\rho(t) - \rho(s) ||_{\gamma_0} (||\rho(t)||_{\gamma_0}+||\rho(s)||_{\gamma_0} ) \\
  & \leq C ||\rho(t) - \rho(s) ||_{\gamma_0} (2||\rho(t)||_{\gamma_0}+||\rho(t) - \rho(s) ||_{\gamma_0}) \\
  & < C \tilde{\varepsilon} (2||\rho(t) ||_{\gamma_0}+\tilde{\varepsilon}) \\
  & < \varepsilon \ ,
\end{split}
\end{equation*} 
 which completes the proof.
\end{proof}

\section{Euler-Poisson system} \label{sectionEP}

Set $p(\rho) = \rho^\gamma,$ $\gamma > 1$ and let $h(\rho) = \tfrac{1}{\gamma - 1} \rho^\gamma.$ The functions $p$ and $h$ can be regarded as the pressure and internal energy functions of system (\ref{0EP}), respectively. It can be readily seen that $p$ and $h$ satisfy 
\begin{equation} \label{thermoident}
\rho h^{\prime \prime}(\rho)=p^\prime(\rho) \ , \ \ \ \rho h^\prime(\rho)=p(\rho)+h(\rho) \ \ \ \text{for} \ \rho > 0 \ .
\end{equation} 
Using the pressure function, system (\ref{0EP}) can be rewritten as:
\begin{equation} \label{EP}
 \begin{cases}
  \partial_t \rho + \nabla \cdot (\rho u) = 0 \\
  \partial_t (\rho u) + \nabla \cdot (\rho u \otimes u) + \nabla p(\rho) + \rho \nabla \phi = 0 \qquad \text{in} \ \mathopen{]}0,T\mathclose{[} \times \mathbb{R}^d\\
  -\Delta \phi = \rho \ 
 \end{cases}
\end{equation}
where $T>0$ is a finite fixed time horizon and $d \in \mathbb{N}, \ d \geq 3.$ Using the language of Riesz potentials, one regards the electric potential $\phi$ as $$\phi = \tfrac{1}{c(d)}I_2 \rho \ . $$ System (\ref{EP}) is supplemented with initial data $\rho_0, u_0,$ and $\phi_0 = \tfrac{1}{c(d)}I_2 \rho_0  .$
\subsection{Energy and relative energy}
Assume that $(\rho, \rho u)$ with $\phi = K * \rho$ is a smooth solution of $(\ref{EP}),$ where $K(x) = \frac{1}{c(d)|x|^{d-2}}.$ Multiplying the momentum equation by $u$ and using the continuity and Poisson equations yields
\begin{equation*}
\partial_t \big(\tfrac{1}{2} \rho |u|^2 + h(\rho) + \tfrac{1}{2}|\nabla \phi|^2 \big) + \nabla \cdot \big( \tfrac{1}{2} \rho |u|^2u+\rho h^\prime(\rho)+\phi \rho u - \phi \nabla \partial_t \phi \big) = 0 \ .
\end{equation*}
The previous identity represents the conservation of total energy of the system.
\par
The total energy of the system can be decomposed into kinetic and potential parts. The kinetic energy functional is the functional $\mathcal{K}=\mathcal{K}(\rho, \rho u)$ given by
\begin{equation} \label{kinfunct}
\mathcal{K}(\rho,\rho u) = \int_{\mathbb{R}^d} \tfrac{1}{2} \rho |u|^2 \ dx \ , 
\end{equation}
while the potential energy functional is the functional $\mathcal{E} = \mathcal{E}(\rho)$ given by 
  \begin{equation} \label{potfunct}
  \begin{split}
 \mathcal{E}(\rho) &= \int_{\mathbb{R}^d} h(\rho) + \tfrac{1}{2} \rho ( K * \rho) \ dx  \\
 &= \int_{\mathbb{R}^d} h(\rho) + \tfrac{1}{2}|\nabla \phi|^2 \ dx \ , \quad -\Delta \phi = \rho \ .
 \end{split}
\end{equation} \label{kineticderiv}
It is important to calculate the functional derivatives of these energy functionals. A straightforward computation gives
\begin{equation}
\frac{\delta \mathcal{K}}{\delta \rho}(\rho, \rho u) = -\tfrac{1}{2} |u|^2 \ , \quad \frac{\delta \mathcal{K}}{\delta (\rho u)}(\rho, \rho u) = u \ ,
\end{equation}
and from the symmetry of $K$ one also deduces that
\begin{equation} \label{potentialderiv}
\frac{\delta \mathcal{E}}{\delta \rho}(\rho) = h^\prime(\rho) + \phi \ .
\end{equation}
Thus, system (\ref{EP}) can be recast into a more abstract form: 
\begin{equation} \label{EPfunct}
 \begin{cases}
  \partial_t \rho + \nabla \cdot (\rho u) = 0 \\
  \partial_t (\rho u) + \nabla \cdot (\rho u \otimes u) + \rho \nabla \dfrac{\delta \mathcal{E}}{\delta \rho}(\rho) = 0 \ ,
 \end{cases}
\end{equation}
where $\mathcal{E}$ is given by (\ref{potfunct}). The last term on the right-hand side of the previous momentum equation can also be rewritten as 
\begin{equation}
\rho \nabla \dfrac{\delta \mathcal{E}}{\delta \rho}(\rho) = -\nabla \cdot S(\rho) \ ,
\end{equation}
where $S(\rho)=-p(\rho)I-\tfrac{1}{2}|\nabla \phi|^2I+\nabla \phi \otimes \nabla \phi$ is a stress tensor.
\par 
Under this abstract setup one replicates the calculations done in \cite{alves, GLT} to derive the relative energy identity for this system. Let $(\bar \rho, \bar \rho \bar u)$ with $\bar \phi = K * \bar \rho$ be another smooth solution of (\ref{EP}). The relative potential energy functional is given by the quadratic part of the Taylor series expansion of $\mathcal{E}.$ Precisely, 
\begin{equation} \label{relpotfunct}
\mathcal{E}(\rho | \bar \rho) = \mathcal{E}(\rho) - \mathcal{E}(\bar \rho) - \big\langle \frac{\delta \mathcal{E}}{\delta \rho}(\bar \rho), \rho - \bar \rho \big\rangle  = \int_{\mathbb{R}^d} h(\rho | \bar \rho) + \tfrac{1}{2}|\nabla (\phi - \bar \phi)|^2 \ dx \ ,
\end{equation}
where $h(\rho | \bar \rho) = h(\rho) - h(\bar \rho)- h^\prime(\bar \rho)(\rho - \bar \rho).$ Similarly, the relative kinetic energy functional is the functional $\mathcal{K}(\rho , \rho u | \bar \rho, \bar \rho \bar u)$ given by
\begin{equation*}
   \begin{split}
    \mathcal{K} (\rho,  \rho u  |  \bar{\rho}, \bar{\rho} \bar{u})  & =  \ \mathcal{K}(\rho, \rho u) - \mathcal{K}(\bar{\rho}, \bar{\rho} \bar{u})  - \big< \frac{\delta \mathcal{K}}{\delta \rho}(\bar{\rho}, \bar{\rho} \bar{u}) , \rho - \bar{\rho}\big> - \big< \frac{\delta \mathcal{K}}{\delta (\rho u)}(\bar{\rho}, \bar{\rho} \bar{u}) , \rho u - \bar{\rho} \bar u\big> \\
    & =   \int_{\mathbb{R}^d} \tfrac{1}{2} \rho |u - \bar{u}|^2  \ dx \ .
    \end{split}
 \end{equation*} 

Next one presents the evolution of the relative total energy of the system. For a detailed exposition of the calculations involved refer to \cite{alves, GLT}. 
\begin{equation} \label{formalRE}
\begin{split}
 \frac{d}{dt}  \Big( \mathcal{K}(\rho, \rho u | \bar \rho, \bar \rho \bar u) + \mathcal{E}(\rho | \bar{\rho}) \Big) = &  - \int_{\mathbb{R}^d} \nabla \bar{u} : \rho (u-\bar{u}) \otimes (u-\bar{u}) \ dx - \int_{\mathbb{R}^d} p(\rho | \bar{\rho}) \nabla \cdot \bar{u} \ dx \\
  & +  \int_{\mathbb{R}^d}  (\rho -\bar{\rho}) \bar{u}  \cdot \nabla(\phi- \bar{\phi}) \ dx \ .
\end{split}
\end{equation}

\subsection{Weak solutions} Before stating the notion of weak solution, one explains how the theory on Riesz potentials helps in giving meaning to the weak formulation of system (\ref{EP}), and derives some results concerning the potential $\phi = \tfrac{1}{c(d)}I_2 \rho$. \par 
Having in mind the momentum equation of system (\ref{EP}), a reasonable weak formulation requires that $p(\rho)$ and $\rho \nabla \phi$ belong to $L^1$ in space. The density $\rho$ is assumed to belong to $C\big([0,T[;  L^1(\mathbb{R}^d) \cap L^\gamma(\mathbb{R}^d) \big),$ from which it follows that $p(\rho)=\rho^\gamma \in C\big([0,T[;L^1(\mathbb{R}^d) \big).$ Having this regularity on the density $\rho$, using the machinery of Riesz potentials one will observe that for $\gamma \geq \tfrac{2d}{d+1}$ the term $\rho \nabla \phi$ belongs to $C\big([0,T[;L^1(\mathbb{R}^d, \mathbb{R}^d)\big).$ First, one needs the following result:

\begin{proposition}
 Let $\rho \in C\big([0,T[;L^1(\mathbb{R}^d) \cap L^\gamma(\mathbb{R}^d)\big)$ with $\gamma \geq \tfrac{2d}{d+2},$ and let $\phi =\frac{1}{c(d)}I_2 \rho.$ Then $\phi \in C\big([0,T[; L^{\frac{2d}{d-2}}(\mathbb{R}^d) \big)$ and its weak spatial gradient is given by 
 \begin{equation*} 
 \nabla \phi (t,x) = \frac{(2-d)}{c(d)} \int_{\mathbb{R}^d} \frac{(x-y)\rho(t,y)}{|x-y|^d} \ dy.
 \end{equation*}
 Furthermore, $\nabla \phi \in C\big([0,T[;L^2(\mathbb{R}^d,\mathbb{R}^d)\big) .$
 \end{proposition}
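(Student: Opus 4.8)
The plan is to establish the three assertions in sequence, the weak-gradient identity being the substantive one. Set $p = \tfrac{2d}{d+2}$; since $d \geq 3$ one has $1 < p < \tfrac d2$, and since $1 \leq p \leq \gamma$ the interpolation bound $\|f\|_p \leq \|f\|_{1}^{\theta}\|f\|_{\gamma}^{1-\theta}$ (for a suitable $\theta$) shows $\rho \in C\big([0,T[;L^p(\mathbb{R}^d)\big)$. A direct computation gives $\tfrac{dp}{d-2p} = \tfrac{2d}{d-2}$, so Proposition \ref{Steinprop} applied with $\alpha = 2$ yields $I_2\rho(t) \in L^{\frac{2d}{d-2}}(\mathbb{R}^d)$ together with $\|I_2(\rho(t)-\rho(s))\|_{\frac{2d}{d-2}} \leq C(d)\,\|\rho(t)-\rho(s)\|_p$ for all $s,t \in [0,T[$. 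As $\phi(t) = \tfrac1{c(d)}I_2\rho(t)$ and $I_2$ is linear, this estimate transfers the $L^p$-continuity of $\rho$ to $L^{\frac{2d}{d-2}}$-continuity of $\phi$, so $\phi \in C\big([0,T[;L^{\frac{2d}{d-2}}(\mathbb{R}^d)\big)$; in particular $\phi(t,\cdot)$ is finite almost everywhere.

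I would then introduce $G(t,x) = \tfrac{2-d}{c(d)}\int_{\mathbb{R}^d}\tfrac{(x-y)\,\rho(t,y)}{|x-y|^d}\,dy$ and note the pointwise bound $|G(t,x)| \leq \tfrac{d-2}{c(d)}\,\big(I_1|\rho(t,\cdot)|\big)(x)$. Since $\gamma \geq \tfrac{2d}{d+2}$ is exactly the hypothesis of (\ref{Irho2}) with $\alpha = 1$, one gets $I_1|\rho(t)| \in L^2(\mathbb{R}^d)$ with $\|I_1|\rho(t)|\|_2 \leq C(d)\,\|\rho(t)\|_p$; hence $G(t,\cdot) \in L^2(\mathbb{R}^d,\mathbb{R}^d)$ is well defined, the defining integral converging absolutely for a.e.\ $x$, and linearity again gives $\|G(t)-G(s)\|_2 \leq \tfrac{(d-2)C(d)}{c(d)}\|\rho(t)-\rho(s)\|_p$, so $G \in C\big([0,T[;L^2(\mathbb{R}^d,\mathbb{R}^d)\big)$. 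It remains to identify $G(t,\cdot)$ with the distributional spatial gradient of $\phi(t,\cdot)$ for each fixed $t$.

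For that, fix $t$ and $\psi \in C_c^\infty(\mathbb{R}^d)$; the goal is $\int_{\mathbb{R}^d}\phi(t,x)\,\partial_i\psi(x)\,dx = -\int_{\mathbb{R}^d}G_i(t,x)\,\psi(x)\,dx$ for $i = 1,\dots,d$. First I would justify Fubini's theorem: because $\rho(t,\cdot) \in L^1(\mathbb{R}^d)$ and, for any $\chi \in C_c^\infty(\mathbb{R}^d)$ and $0 < b < d$, the map $y \mapsto \int_{\mathbb{R}^d}\tfrac{|\chi(x)|}{|x-y|^{b}}\,dx$ is bounded on $\mathbb{R}^d$ (it is continuous and $O(|y|^{-b})$ as $|y|\to\infty$), the double integrals $\iint\tfrac{|\rho(t,y)|\,|\partial_i\psi(x)|}{|x-y|^{d-2}}$ and $\iint\tfrac{|\rho(t,y)|\,|\psi(x)|}{|x-y|^{d-1}}$ are finite. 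Interchanging the $x$- and $y$-integrations reduces the claim to the one-variable identity $\int_{\mathbb{R}^d}\tfrac{\partial_i\psi(x)}{|x-y|^{d-2}}\,dx = (d-2)\int_{\mathbb{R}^d}\tfrac{(x_i-y_i)\,\psi(x)}{|x-y|^d}\,dx$ for each fixed $y$, that is, to $\partial_{x_i}\big(|x-y|^{2-d}\big) = (2-d)(x_i-y_i)|x-y|^{-d}$ in the sense of distributions. This I would prove in the classical way: write $\int_{\mathbb{R}^d}\tfrac{\partial_i\psi}{|x-y|^{d-2}}\,dx = \lim_{\varepsilon\to0}\int_{\{|x-y|>\varepsilon\}}\tfrac{\partial_i\psi}{|x-y|^{d-2}}\,dx$ (dominated convergence, $|\cdot|^{2-d}\in L^1_{loc}$), integrate by parts on $\{|x-y|>\varepsilon\}$, bound the resulting boundary integral over $\{|x-y|=\varepsilon\}$ by a constant times $\varepsilon^{-(d-2)}\varepsilon^{d-1} = \varepsilon$ so that it vanishes as $\varepsilon\to0$, and pass to the limit in the remaining volume term (dominated convergence again, now with $|\cdot|^{1-d}\in L^1_{loc}$). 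Reversing the two Fubini interchanges then gives the stated formula for $\nabla\phi(t,\cdot)$, and combined with the previous paragraph this yields $\nabla\phi = G \in C\big([0,T[;L^2(\mathbb{R}^d,\mathbb{R}^d)\big)$.

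I expect the main obstacle to be precisely this last step: setting up the two applications of Fubini with dominating functions that are genuinely integrable over $\mathbb{R}^d\times\mathbb{R}^d$, and carrying out the excised-ball integration by parts with the correct unit normal (inward, pointing toward $y$) so that the boundary term is provably negligible. Everything else — the interpolation step and the two continuity-in-time statements — is a routine adaptation of the arguments already used in Propositions \ref{rieszCL2} and \ref{rhoIrhoCL1}.
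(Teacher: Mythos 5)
Your proof is correct and follows essentially the same route as the paper: interpolation to get $\rho(t)\in L^{p}$ with $p=\tfrac{2d}{d+2}$, Proposition \ref{Steinprop} for the $L^{\frac{2d}{d-2}}$ bound and continuity of $\phi$, Fubini plus an excised-ball integration by parts to identify the weak gradient, and the pointwise bound $|\nabla\phi|\leq\tfrac{d-2}{c(d)}I_1|\rho|$ together with the $\alpha=1$ case of the $L^2$ Riesz estimate for the final continuity claim. The only noteworthy (and welcome) deviation is that you justify the Fubini interchange by bounding the genuine double integral $\iint\frac{|\rho(t,y)|\,|\partial_i\psi(x)|}{|x-y|^{d-2}}\,dx\,dy$ directly via $\rho(t)\in L^1$ and boundedness of $y\mapsto\int\frac{|\psi(x)|}{|x-y|^{b}}\,dx$, which is more airtight than the paper's appeal to $\phi(t)\in L^1_{\mathrm{loc}}$ alone.
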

 \begin{proof}
 Set $p = \frac{2d}{d+2}$ and note that $\rho \in C\big([0,T[; L^p(\mathbb{R}^d) \big).$ For each $t \in [0,T[,$ using (\ref{Stein}) one has
 $$||\phi(t)||_{\frac{2d}{d-2}} = \tfrac{1}{c(d)}||I_2\rho(t) ||_{\frac{dp}{d-2p}} \leq C(d) \  ||\rho(t)||_p \ ,$$
 so $\phi(t) \in C\big([0,T[; L^{\frac{2d}{d-2}}(\mathbb{R}^d) \big).$ The continuity in time of $\phi$ follows from the linearity of $I_2.$ \par 
 Now let $v \in C_c^{\infty}(\mathbb{R}^d)$ and set $K(x,y) = \dfrac{1}{c(d)|x-y|^{d-2}}.$ One wishes to prove that for every $t \in [0,T[$ and $i = 1, \ldots, d $ it holds that
 \begin{equation} \label{weakgradient}
 \int_{\mathbb{R}^d} \phi(t) v_{x_i} \ dx = - \int_{\mathbb{R}^d} \int_{\mathbb{R}^d} K_{x_i}(x,y) \rho(t,y) \ dy \ v \ dx \ .
 \end{equation}
  Fix $t \in [0,T[$ and note that $\int_{\mathbb{R}^d} \phi(t) v_{x_i} \ dx < \infty $ since $\phi(t) \in L_{loc}^1(\mathbb{R}^d).$ Fubini's theorem then yields that
\[\int_{\mathbb{R}^d} \phi(t) v_{x_i} \ dx = \int_{\mathbb{R}^d} \int_{\mathbb{R}^d}  K(x,y) v_{x_i}(x)dx \ \rho(t,y) \ dy \ .\]
 Fix $y \in \mathbb{R}^d$ and let $\varepsilon > 0.$ Denote by $B_\varepsilon(y)$ the open ball of center $y$ and radius $\varepsilon.$ Splitting the inner integral and using the fact that $K( \cdot, y)$ is smooth in $B_\varepsilon^c (y)$ one obtains
\begin{equation}
\begin{split}
\int_{\mathbb{R}^d}  K(x,y) v_{x_i}(x) \ dx = & \int_{B_\varepsilon(y)}  K(x,y) v_{x_i}(x) \ dx + \int_{B_\varepsilon^c(y)}  K(x,y) v_{x_i}(x) \ dx \\
=&\int_{B_\varepsilon(y)}  K(x,y) v_{x_i}(x) \ dx + \int_{\partial B_\varepsilon(y)}  K(x,y) v(x) \nu_i(x) \ dS(x) \\
& - \int_{B_\varepsilon^c(y)}  K_{x_i}(x,y) v(x) \ dx \\
= & \ I_\varepsilon + J_\varepsilon - \int_{B_\varepsilon^c(y)}  K_{x_i}(x,y) v(x) \ dx \ ,
\end{split}
\end{equation} 
 where $\nu_i$ is the $i$-th component of the unit inward normal vector to $\partial B_\varepsilon(y).$ Next it is shown that $\lim I_\varepsilon = \lim J_\varepsilon = 0$ as $\varepsilon \to 0$ from which (\ref{weakgradient}) follows. Using polar coordinates and the fact that $v \in C^\infty_c(\mathbb{R}^d)$ one has that
\begin{equation*}
 \begin{split}
 |I_\varepsilon| & \leq \int_{B_\varepsilon(y)}|K(x,y)| |v_{x_i}(x)| \ dx \\
                 & \leq C \int_{B_\varepsilon(y)} \frac{1}{|x-y|^{d-2}} \ dx \\
                 & = C \int_0^\varepsilon \int_{\partial B_r(y)} \frac{1}{|x-y|^{d-2}} \ dS(x) dr \\
                & = C \varepsilon^2 \ ,         
 \end{split}
 \end{equation*}
and
 \begin{equation*}
 \begin{split}
|J_\varepsilon| & \leq \int_{\partial B_\varepsilon(y)}|K(x,y)| |v(x)| \ dS(x) \\
                 & \leq C \int_{\partial B_\varepsilon(y)} \frac{1}{|x-y|^{d-2}} \ dS(x) \\
                 & = C \varepsilon  \ .
 \end{split}
 \end{equation*}

Hence, the weak spatial gradient of $\phi$ is bounded from above by $I_1 |\rho|$
 \[|\nabla \phi |\leq \tfrac{d-2}{c(d)} I_1 |\rho| \ , \] whence, by Proposition \ref{rieszCL2} with $\alpha = 1$ it follows that $\nabla \phi \in C\big([0,T[;L^2(\mathbb{R}^d,\mathbb{R}^d)\big).$
 \end{proof} \noindent
Thus, taking $\alpha = 1$ in Proposition \ref{rhoIrhoCL1} one has that if $\gamma \geq \frac{2d}{d+1}$ then \[\rho \nabla \phi \in C\big([0,T[;L^1(\mathbb{R}^d, \mathbb{R}^d)\big) \ .\] \par 
Regarding the momentum $\rho u$, it is assumed to belong to $C\big([0,T[;L^1(\mathbb{R}^d,\mathbb{R}^d)\big),$ while $\sqrt{\rho} u$ is assumed to belong to $C\big([0,T[;L^2(\mathbb{R}^d,\mathbb{R}^d)\big).$ For a triple $(\rho, \rho u, \frac{1}{c(d)}I_2 \rho)$ with the regularity stated above, one defines the energy $ \mathcal{H} : [0,T[ \ \to \mathbb{R}$ by 
\[\mathcal{H}(t)= \int_{\mathbb{R}^d} \tfrac{1}{2} \rho(t) |u(t)|^2 + \tfrac{1}{\gamma-1} \rho^\gamma(t)+\tfrac{1}{2}|\nabla \phi(t)|^2 \ dx \ .\] Observe that in these conditions the energy $\mathcal{H}$ is a continuous function. \par
The precise definition of a weak solution for system (\ref{EP}) is now given. 
\begin{definition} \label{weakformulation}
 The vector function $(\rho,\rho u)$ with $\rho \geq 0$ and regularity
 \[\rho \in C\big([0,T[;  L^1(\mathbb{R}^d) \cap L^\gamma(\mathbb{R}^d) \big) \ ,\]
 \[\rho u \in C\big([0,T[;L^1(\mathbb{R}^d,\mathbb{R}^d)\big) \ ,\]
 \[\sqrt{\rho} u \in  C\big([0,T[;L^2(\mathbb{R}^d,\mathbb{R}^d)\big) \ , \]
together with $\phi = \frac{1}{c(d)}I_2 \rho, $ 
is a weak solution of (\ref{EP}) provided that:
\begin{enumerate}[(i)]
 \item $(\rho,\rho u)$ satisfies (\ref{EP}) in the weak sense
 \begin{equation} \label{weak1}
         -\int_0^T \int_{\mathbb{R}^d} \varphi_t \rho \ dxdt-\int_0^T \int_{\mathbb{R}^d} \nabla \varphi \cdot (\rho u) \ dxdt - \int_{\mathbb{R}^d} \varphi(0,x) \rho_0(x) \ dx=0 \ ,
        \end{equation} 
 \begin{equation} \label{weak2}
        \begin{split}
         - &  \int_0^T \int_{\mathbb{R}^d} \tilde{\varphi}_t \cdot (\rho u) \ dxdt -  \int_0^T \int_{\mathbb{R}^d} \nabla \tilde{\varphi} : \rho u \otimes u \ dx dt \\
        & -  \int_0^T \int_{\mathbb{R}^d} (\nabla \cdot \tilde{\varphi}) \rho^\gamma \ dxdt
        - \int_{\mathbb{R}^d} \tilde{\varphi}(0,x) \cdot \rho_0(x) u_0(x) \ dx \\
        = &   -  \int_0^T \int_{\mathbb{R}^d} \tilde{\varphi}\cdot (\rho \nabla \phi) \ dxdt \ ,
        \end{split}
        \end{equation} \\
for all Lipschitz test functions $\varphi: \mathopen{[}0,T\mathclose{[} \times \mathbb{R}^d \to \mathbb{R}, \ \tilde{\varphi}:\mathopen{[}0,T\mathclose{[} \times \mathbb{R}^d \to \mathbb{R}^d$ compactly supported in time, and with $\tilde{\varphi}$ satisfying, for each $t \in [0,T[,$ the following limiting behaviour at infinity 
$$\lim\limits_{|x| \to \infty} |\tilde{\varphi}(t,x)| = 0 \ ,$$
    \item Mass conservation:
    \begin{equation} \label{massconservation}
 ||\rho(t) ||_1 = M < \infty \ , \quad \forall t \in [0,T[ \ ,
  \end{equation} 
  \item Finitude of energy:
  \begin{equation} \label{finiteenergy}
  \underset{t \in [0,T[}{\text{sup}} \mathcal{H}(t) < \infty \ .
  \end{equation}
\end{enumerate}
Moreover, a weak solution of (\ref{EP}) is called dissipative if its energy $\mathcal{H}$ satisfies
 \begin{equation} \label{weakdissip}   
-  \int_0^T  \mathcal{H}(t) \dot{\theta}(t)dt 
    \leq   \mathcal{H}(0) \theta(0)
   \end{equation} 
  for any non-negative $\theta \in W^{1,\infty}([0,T[)$ with compact support.
\end{definition}

From a careful choice of the test function $\theta$ above one deduces that the energy $\mathcal{H}$ of a dissipative weak solution of (\ref{EP}) satisfies, for each $t \in [0,T[,$
\begin{equation} \label{weakdiss}
\mathcal{H}(t) \leq \mathcal{H}(0) \ .
\end{equation}
Indeed, fix $t \in [0,T[,$ let $\kappa$ be such that $t+\kappa < T$, and define $\theta : [0,T[\  \to \mathbb{R}$ by
\begin{equation} \label{thetatest}
\theta (\tau) = \theta_\kappa^t(\tau)=
\begin{cases}
1, \ \text{if} \ 0 \leq \tau < t \\
\frac{t- \tau}{\kappa}+1, \ \text{if} \ t \leq \tau < t+\kappa \\
0, \ \text{if} \ t+\kappa \leq \tau < T.
\end{cases}
\end{equation}
 Using this choice of $\theta$ in (\ref{weakdissip}) yields
 \begin{equation*}
 \frac{1}{\kappa} \int_t^{t+ \kappa} \mathcal{H}(\tau) d\tau  \leq  \mathcal{H}(0) \ .
 \end{equation*}
Letting $\kappa \to 0^+$ above one reaches the desired identity.

\subsection{Strong solutions}

In this subsection, the notion of strong solution that will be used in the subsequent analysis is described. \par
A Lipschitz vector function $(\bar \rho, \bar \rho \bar u),$ with regularity 
\[ \bar \rho \in L^\infty \big([0,T[; L^1(\mathbb{R}^d) \cap L^\infty(\mathbb{R}^d) \big) \ ,  \]
\[\bar u \in  L^\infty \big([0,T[; W^{1,1}(\mathbb{R}^d, \mathbb{R}^d) \cap W^{1,\infty}(\mathbb{R}^d,\mathbb{R}^d) \big)  \ , \]
together with $\bar \phi = \tfrac{1}{c(d)} I_2 \bar \rho,$ is called a strong solution of (\ref{EP}) if:

\begin{enumerate}[(i)]
\item $\bar \rho > 0,$

\item  $(\bar \rho, \bar \rho \bar u, \bar \phi)$ satisfies
\begin{equation} \label{EPstrong}
 \begin{cases}
  \partial_t \bar \rho + \nabla \cdot (\bar \rho \bar u) = 0 \\
  \partial_t \bar u + \bar u \cdot \nabla  \bar u + \nabla h^\prime(\bar \rho) +  \nabla \bar \phi = 0  
 \end{cases}
\end{equation}
 for almost every $(t,x) \in \  \mathopen{]}0,T\mathclose{[} \times \mathbb{R}^d,$ and  
 \[\lim\limits_{|x|\to \infty} |\bar u (t,x)| = 0 \quad \forall t \in [0,T[ \ ,\]
\item for each $t \in [0,T[$ and each $\kappa>0$ so that $t+\kappa<T$ the functions 
\[
\tau \mapsto \theta^t_\kappa(\tau) \frac{\delta \bar{\mathcal{H}}}{\delta \rho}(\tau)=\theta \big(-\tfrac{1}{2}|\bar u|^2 + h^\prime(\bar \rho) + \bar \phi \big) \ ,\]
\[\tau \mapsto \theta^t_\kappa(\tau) \frac{\delta \bar{\mathcal{H}}}{\delta(\rho u)}(\tau) = \theta \bar u \ ,\]
are Lipschitz continuous, where $\theta$ is given by (\ref{thetatest}) and $\bar{\mathcal{H}}: \mathopen{[}0,T\mathclose{[} \to \mathbb{R}$ is the total energy
\[\bar{\mathcal{H}}(t)= \int_{\mathbb{R}^d} \tfrac{1}{2} \bar \rho(t) |\bar u(t)|^2 + h( \bar  \rho(t))+\tfrac{1}{2}|\nabla \bar \phi(t)|^2   \ dx \ .\]
\end{enumerate} 
A strong solution is also assumed to emanate from initial data $(\bar \rho_0, \bar \rho_0 \bar u_0), \ \bar \phi_0 = \tfrac{1}{c(d)}I_2 \bar \rho_0$ that satisfy the bounds
\begin{equation} \label{stronginitialbounds}
\int_{\mathbb{R}^d} \bar \rho_0 \ dx = \bar M <  \infty \ , \quad \bar{\mathcal{H}}(0) <  \infty \ . 
\end{equation}
Multiplying the second equation of (\ref{EPstrong}) by $\bar \rho \bar u$ and integrating over space yields
$$\frac{d}{dt} \bar{\mathcal{H}} = 0 \ , $$
so $\bar{\mathcal{H}}(t) = \bar{\mathcal{H}}(0)$ for $t \in [0,T[.$ Thus, the continuity equation and the previous expression imply that $(\ref{stronginitialbounds})$ is propagated for all times $t \in \mathopen{[}0,T\mathclose{[}$.

\section{Weak-strong uniqueness} \label{sectionwsu}

For the notions of solutions specified in the previous section, one defines the relative energy function $\Psi:\mathopen{[}0,T\mathclose{[} \to \mathbb{R}$ by 
\begin{equation} \label{relenergyfunct}
\begin{split}
\Psi(t) &= \mathcal{H}(t) - \bar{\mathcal{H}}(t) - \big< \frac{\delta \bar{\mathcal{H}}}{\delta \rho}, \rho(t) - \bar \rho(t) \big> - \big< \frac{\delta \bar{\mathcal{H}}}{\delta (\rho u)}, \rho(t) u(t) - \bar \rho(t) \bar u(t) \big> \\
&= \int_{\mathbb{R}^d} \tfrac{1}{2}\rho(t) |u(t) - \bar u(t)|^2+h\big(\rho(t) | \bar \rho(t)\big)+\tfrac{1}{2} |\nabla \big(\phi(t) - \bar \phi(t)\big)|^2 \ dx \ .
\end{split}
\end{equation}

Due to the strict convexity of $h(\rho) = \tfrac{1}{\gamma - 1 }\rho^\gamma,$ one obtains the main result, Theorem \ref{mainthm}, as an immediate consequence of the next theorem. 

\begin{theorem} \label{mainstabilitythm}
Let $(\rho, \rho u)$ with $\phi = \tfrac{1}{c(d)} I_2 \rho$ be a dissipative weak solution of (\ref{EP}) with $\gamma \geq \tfrac{2d}{d+1},$ and let $(\bar \rho, \bar \rho \bar u)$ with $\bar \phi = \tfrac{1}{c(d)} I_2 \bar \rho$ be a strong solution of (\ref{EP}). There exists a positive constant $C$ such that the relative energy $\Psi$ of these solutions satisfies the stability estimate
\begin{equation} \label{mainstability}
\Psi(t) \leq e^{CT} \Psi(0)
\end{equation}
for $t \in [0,T[.$ Therefore, if $\Psi(0) = 0$ then $\Psi \equiv 0.$
\end{theorem}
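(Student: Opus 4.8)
The plan is to run the relative energy method rigorously. The formal identity \eqref{formalRE} was derived for smooth solutions; the first task is to reproduce its content when $(\rho,\rho u)$ is only a dissipative weak solution and $(\bar\rho,\bar\rho\bar u)$ is a strong solution. Concretely, I would start from the definition \eqref{relenergyfunct} of $\Psi(t)$, write $\Psi(t)=\mathcal H(t)-\bar{\mathcal H}(t)-\langle \tfrac{\delta\bar{\mathcal H}}{\delta\rho},\rho-\bar\rho\rangle-\langle\tfrac{\delta\bar{\mathcal H}}{\delta(\rho u)},\rho u-\bar\rho\bar u\rangle$, and control each term in time. For the $\mathcal H$ term I use the dissipation inequality \eqref{weakdissip}, exploited as in the derivation of \eqref{weakdiss}: testing \eqref{weakdissip} against the family $\theta^t_\kappa$ from \eqref{thetatest} and letting $\kappa\to0^+$. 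For the $\bar{\mathcal H}$ term I use $\bar{\mathcal H}(t)=\bar{\mathcal H}(0)$. For the two pairing terms, the functions $\theta^t_\kappa\,\tfrac{\delta\bar{\mathcal H}}{\delta\rho}=\theta(-\tfrac12|\bar u|^2+h'(\bar\rho)+\bar\phi)$ and $\theta^t_\kappa\,\tfrac{\delta\bar{\mathcal H}}{\delta(\rho u)}=\theta\bar u$ are, by hypothesis (iii) of the strong-solution definition, admissible Lipschitz test functions vanishing at infinity, so I may insert them into the weak formulations \eqref{weak1}–\eqref{weak2}; this produces, after the $\kappa\to0^+$ limit, the time-integrated evolution of the pairing terms in terms of the fluxes of the weak solution tested against the strong solution's quantities.

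Next I assemble these pieces. Adding the four contributions, the bulk terms should organize — exactly as in the formal computation leading to \eqref{formalRE} and using the thermodynamic identities \eqref{thermoident} together with $\tfrac{\delta\mathcal E}{\delta\rho}=h'+\phi$ — into the three integrals on the right of \eqref{formalRE}, now in integrated form: an inequality
\[
\Psi(t)\le \Psi(0)+\int_0^t\!\!\int_{\mathbb R^d}\Big(-\nabla\bar u:\rho(u-\bar u)\otimes(u-\bar u)-p(\rho|\bar\rho)\,\nabla\cdot\bar u+(\rho-\bar\rho)\bar u\cdot\nabla(\phi-\bar\phi)\Big)\,dx\,ds.
\]
The inequality (rather than equality) is all I need, and it comes from the dissipative bound on $\mathcal H$. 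The first two integrands are estimated pointwise by $\|\nabla\bar u\|_\infty$ times, respectively, the relative kinetic energy density $\tfrac12\rho|u-\bar u|^2$ and (using $\gamma\ge1$ and convexity, as in \cite{GLT}) a constant multiple of $h(\rho|\bar\rho)$; both are absorbed into $C\,\Psi(s)$ with $C$ depending on $\|\nabla\bar u\|_{L^\infty}$.

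The genuinely new term, and the main obstacle, is the Poisson coupling $\int(\rho-\bar\rho)\bar u\cdot\nabla(\phi-\bar\phi)\,dx$. The aim is to bound it by $C\,\Psi(s)$ using only the relative potential energy $\tfrac12\|\nabla(\phi-\bar\phi)\|_2^2$. Writing $\sigma=\rho-\bar\rho$ and noting $-\Delta(\phi-\bar\phi)=\sigma$, i.e. $\phi-\bar\phi=\tfrac1{c(d)}I_2\sigma$ and $\nabla(\phi-\bar\phi)=\tfrac{2-d}{c(d)}\int\frac{(x-y)\sigma(y)}{|x-y|^d}dy$, the target is an integration-by-parts formula — this is precisely Proposition \ref{intparts2prop}, announced in the introduction as proved via the Riesz-potential estimates of subsection \ref{subsec22} — that lets me rewrite $\int\sigma\,\bar u\cdot\nabla(\phi-\bar\phi)$ so that one full gradient $\nabla(\phi-\bar\phi)$ appears on each side, the price being a factor involving $\bar u$ and $\nabla\bar u$. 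The Riesz machinery is what makes every term here finite and the manipulation legitimate: $\sigma\in L^1\cap L^\gamma$ with $\gamma\ge\tfrac{2d}{d+1}$ guarantees, by \eqref{etaIrho1bound} and \eqref{Irho2} with $\alpha=1$, that $\sigma I_1\sigma\in L^1$ and $I_1\sigma\in L^2$, so $\sigma\bar u\cdot\nabla(\phi-\bar\phi)\in L^1$ and the boundary contributions in the integration by parts vanish. After this rewriting the term is controlled by $\|\bar u\|_{W^{1,\infty}}\,\|\nabla(\phi-\bar\phi)\|_2^2\le C\,\Psi(s)$.

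Combining, $\Psi(t)\le\Psi(0)+C\int_0^t\Psi(s)\,ds$ for $t\in[0,T[$ with $C=C(\|\bar u\|_{W^{1,\infty}},d,\gamma)$, and Grönwall's inequality yields $\Psi(t)\le e^{CT}\Psi(0)$, which is \eqref{mainstability}; if $\Psi(0)=0$ then $\Psi\equiv0$. I expect the Poisson term — setting up and justifying the integration-by-parts identity of Proposition \ref{intparts2prop} and checking, via the $L^1\cap L^\gamma$ Riesz estimates, that all integrals converge and no boundary terms survive — to be the delicate step; the kinetic and pressure terms are routine once \eqref{formalRE} has been established in integrated, inequality form.
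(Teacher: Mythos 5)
Your proposal matches the paper's own proof in structure and in every essential step: you derive the integrated relative energy inequality by testing the weak formulations of the difference of the two solutions with $\theta^t_\kappa\,\tfrac{\delta\bar{\mathcal H}}{\delta\rho}$ and $\theta^t_\kappa\,\tfrac{\delta\bar{\mathcal H}}{\delta(\rho u)}$ and combining with the dissipative bound and $\bar{\mathcal H}(t)=\bar{\mathcal H}(0)$, you identify the three error integrals $\mathcal J_1,\mathcal J_2,\mathcal J_3$ from the formal identity \eqref{formalRE}, you correctly isolate the Poisson coupling as the delicate term and handle it exactly as the paper does via Proposition~\ref{intparts2prop} applied to $\sigma=\rho-\bar\rho$ (rewriting it quadratically in $\nabla(\phi-\bar\phi)$ so it is absorbed into $\Psi$), and you close with Gr\"{o}nwall. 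This is the same route as Propositions~\ref{propREinequality} and~\ref{intparts2prop} followed by the estimates of Subsection 4.3.
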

The last two subsections of this part are devoted to give a proof of the previous theorem. 

\subsection{Integration by parts formulas}
Here, one presents two integration by parts formulas that will be essential for handling the terms containing the electric potential $\phi$. \par 
The first formula can be proved in a similar fashion as \cite[Propositon 4.2]{alves}.
\begin{proposition}
Let $\rho, \eta \in C\big([0,T[;L^1(\mathbb{R}^d) \cap L^\gamma(\mathbb{R}^d)\big)$ with $\gamma \geq \frac{2d}{d+2},$ and let $\phi = \frac{1}{c(d)}I_2 \rho,$ $\varphi = \frac{1}{c(d)}I_2 \eta.$ Then, for each $t \in [0,T[,$ 
 \begin{equation} \label{intparts}
 \int_{\mathbb{R}^d} \nabla \phi(t) \cdot \nabla \varphi(t) \ dx = \int_{\mathbb{R}^d} \rho(t) \varphi(t) \ dx = \int_{\mathbb{R}^d} \eta(t) \phi(t) \ dx  =  \tfrac{1}{c(d)} \int_{\mathbb{R}^d}  \int_{\mathbb{R}^d} \frac{\rho(t,x) \eta(t,y)}{|x-y|^{d-2}} \ dxdy \ .
\end{equation}
\end{proposition}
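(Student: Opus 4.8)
The plan is to prove the chain of identities in \eqref{intparts} at an arbitrary fixed time $t\in[0,T[$ (which I suppress from the notation below), splitting it into a Fubini part and an integration-by-parts part.

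\emph{Fubini part.} Substituting $\varphi=\tfrac1{c(d)}I_2\eta$ and $\phi=\tfrac1{c(d)}I_2\rho$ and using the symmetry $|x-y|^{2-d}=|y-x|^{2-d}$ of the kernel, the identities
\[
\int_{\mathbb{R}^d}\rho\,\varphi\,dx \;=\; \tfrac1{c(d)}\int_{\mathbb{R}^d}\!\int_{\mathbb{R}^d}\frac{\rho(x)\eta(y)}{|x-y|^{d-2}}\,dx\,dy \;=\; \int_{\mathbb{R}^d}\eta\,\phi\,dx
\]
are a direct consequence of Fubini's theorem, so the only thing to check is absolute convergence of the double integral, i.e. $\rho\,I_2\eta,\ \eta\,I_2\rho\in L^1(\mathbb{R}^d)$. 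Since $\gamma\ge\tfrac{2d}{d+2}$, interpolation between $L^1$ and $L^\gamma$ gives $\rho,\eta\in L^{2d/(d+2)}(\mathbb{R}^d)$, and then \eqref{etaIrho1bound} with $\alpha=2$ yields $\|\rho\,I_2\eta\|_1+\|\eta\,I_2\rho\|_1\le C(d)\,\|\rho\|_{2d/(d+2)}\|\eta\|_{2d/(d+2)}<\infty$.

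\emph{Integration-by-parts part.} It remains to show $\int_{\mathbb{R}^d}\nabla\phi\cdot\nabla\varphi\,dx=\int_{\mathbb{R}^d}\rho\,\varphi\,dx$. First I would check this against smooth compactly supported functions: for $v\in C_c^\infty(\mathbb{R}^d)$,
\[
\int_{\mathbb{R}^d}\nabla\phi\cdot\nabla v\,dx \;=\; \int_{\mathbb{R}^d}\phi\,(-\Delta v)\,dx \;=\; \int_{\mathbb{R}^d}\rho\,v\,dx ,
\]
where the first equality is the definition of the weak gradient $\nabla\phi$ (established in the preceding proposition, with $\phi,\nabla\phi\in L^1_{loc}$) tested against $\partial_{x_i}v$ and summed over $i$, and the second is the statement $-\Delta\phi=\rho$ in $\mathcal{D}'(\mathbb{R}^d)$, valid because $\tfrac1{c(d)}|x|^{2-d}$ is the fundamental solution of $-\Delta$ on $\mathbb{R}^d$ for $d\ge3$ and $\phi=\tfrac1{c(d)}|x|^{2-d}*\rho$. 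To upgrade $v\in C_c^\infty$ to $v=\varphi$, recall from the preceding proposition (applied to $\eta$) that $\nabla\varphi\in L^2(\mathbb{R}^d,\mathbb{R}^d)$ and $\varphi\in L^{2d/(d-2)}(\mathbb{R}^d)$; mollifying and then truncating, and using the Sobolev inequality $\|w\|_{2d/(d-2)}\le C\|\nabla w\|_2$ to see that the truncation does not spoil convergence, one obtains $v_n\in C_c^\infty(\mathbb{R}^d)$ with $\nabla v_n\to\nabla\varphi$ in $L^2$ and $v_n\to\varphi$ in $L^{2d/(d-2)}$. Passing to the limit in $\int\nabla\phi\cdot\nabla v_n\,dx=\int\rho\,v_n\,dx$ is then justified on the left by $\nabla\phi\in L^2$ and on the right by $\rho\in L^{2d/(d+2)}$, the H\"{o}lder conjugate of $2d/(d-2)$ (again available since $\gamma\ge\tfrac{2d}{d+2}$). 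Finally $\nabla\phi\cdot\nabla\varphi\in L^1$ by Cauchy--Schwarz, so the leftmost integral in \eqref{intparts} is well defined, and combining the two parts gives all the claimed equalities.

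\emph{Main obstacle.} The only genuinely delicate step is the density argument above: $\varphi$ lives in a homogeneous Sobolev-type space (gradient in $L^2$, the function itself only in $L^{2d/(d-2)}$, not in $L^2$), so the cutoff at scale $R$ must be controlled via an estimate of the type $\|(\nabla\chi_R)\,\varphi\|_2\le\|\nabla\chi_R\|_d\,\|\varphi\|_{L^{2d/(d-2)}(\{|x|\ge R\})}\to0$, which relies precisely on the integrability $\varphi\in L^{2d/(d-2)}$ supplied by the preceding proposition. Everything else reduces to Fubini's theorem together with the Riesz-potential estimates collected in Section \ref{sectionRP}.
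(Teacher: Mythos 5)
The paper states this proposition without proof (it passes directly to Proposition~\ref{intparts2prop}), so there is no argument of the paper's to compare yours against. Your proof is correct. The Fubini part is fine: the absolute convergence needed to justify the exchange of integrals is exactly $|\rho|\,I_2|\eta|\in L^1$, and this follows from (\ref{etaIrho1bound}) at $\alpha=2$ once interpolation places $\rho,\eta\in L^{2d/(d+2)}(\mathbb{R}^d)$. In the integration-by-parts part, the chain $\int\nabla\phi\cdot\nabla v\,dx=\int\phi(-\Delta v)\,dx=\int\rho v\,dx$ for $v\in C_c^\infty(\mathbb{R}^d)$ is correctly reduced to the weak-gradient identity of the preceding proposition plus the fundamental-solution property of $\tfrac{1}{c(d)}|x|^{2-d}$ (with the inner Fubini for the convolution justified because $\Phi*|\rho|\in L^1_{loc}$), and the upgrade to $v=\varphi$ via mollify-then-truncate works; the operative ingredient is indeed the cutoff estimate $\|(\nabla\chi_R)\varphi\|_2\le\|\nabla\chi_R\|_d\,\|\varphi\|_{L^{2d/(d-2)}(\{|x|\ge R\})}$ together with the uniform bound on $\|\nabla\chi_R\|_d$, using the integrability $\varphi\in L^{2d/(d-2)}$ supplied by the preceding proposition. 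The Sobolev inequality you mention in passing is not actually needed, since that integrability is already given; this is a cosmetic point.

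Your route does differ from what one would expect by analogy with the paper's proof of Proposition~\ref{intparts2prop}: there, the paper approximates the density by $C_c^\infty$ functions via Proposition~\ref{density}, verifies the identity in the classical smooth compactly supported setting (where it is a standard consequence of the divergence theorem applied to Newtonian potentials), and then passes to the limit in every term using the Riesz estimates of Section~\ref{sectionRP}. The corresponding argument here would approximate both $\rho$ and $\eta$ and pass to the limit in all four integrals simultaneously, again via (\ref{Irho2}), (\ref{IetaIrho1}) and (\ref{etaIrho1bound}). You instead fix $\rho$ and approximate the test function $\varphi$ in the homogeneous Sobolev sense. Both approaches are valid: yours has the modest advantage that the last three equalities need no approximation at all, at the cost of importing a separate density fact (smooth compactly supported functions are dense in $\{\varphi\in L^{2d/(d-2)}:\nabla\varphi\in L^2\}$) rather than reusing the appendix lemma the paper already established.
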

\vspace{2mm}
The second integration by parts formula relies on the following identity that is satisfied by classical solutions of $-\Delta \phi = \rho$ :
\[\rho \nabla \phi = \nabla \big( \tfrac{1}{2}|\nabla \phi|^2 \big) - \nabla \cdot (\nabla \phi \otimes \nabla \phi) \ . \]
\begin{proposition} \label{intparts2prop}
Let $\rho \in L^1(\mathbb{R}^d) \cap L^\gamma(\mathbb{R}^d)$, with $\gamma \geq \tfrac{2d}{d+1},$ let $\phi =\frac{1}{c(d)}I_2 \rho,$ and let $\bar u$ be a continuous vector field defined on $ \mathbb{R}^d$ satisfying $\lim\limits_{|x|\to \infty} |\bar u(x)| = 0$ and belonging to $W^{1,1}(\mathbb{R}^d,\mathbb{R}^d) \cap W^{1,\infty}(\mathbb{R}^d,\mathbb{R}^d).$ Then
\begin{equation} \label{intparts2}
\int_{\mathbb{R}^d} \rho \nabla \phi \cdot \bar u \ dx = \int_{\mathbb{R}^d} \nabla \bar u : \nabla \phi \otimes \nabla \phi \ dx - \int_{\mathbb{R}^d} (\nabla \cdot \bar u) \tfrac{1}{2}|\nabla \phi|^2 \ dx \ .
\end{equation}
\end{proposition}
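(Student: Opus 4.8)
\emph{Strategy.} The plan is to recognize (\ref{intparts2}) as the classical identity produced by the electrostatic (Maxwell) stress tensor, and then to make the two integrations by parts legitimate at the available regularity by approximating $\rho$ with smooth, compactly supported densities and using the Riesz-potential bounds of Section \ref{sectionRP} to pass to the limit. If $\phi$ were smooth, Poisson's equation $-\Delta\phi=\rho$ would give the pointwise identity
\[
\rho\,\partial_i\phi = -\Delta\phi\,\partial_i\phi = -\partial_j\big(\partial_j\phi\,\partial_i\phi\big) + \tfrac12\,\partial_i|\nabla\phi|^2 = -\partial_j\Big(\partial_j\phi\,\partial_i\phi - \tfrac12\,\delta_{ij}|\nabla\phi|^2\Big),
\]
that is $\rho\nabla\phi = -\nabla\cdot\big(\nabla\phi\otimes\nabla\phi-\tfrac12|\nabla\phi|^2 I\big)$; contracting with $\bar u$ and integrating by parts would then yield (\ref{intparts2}). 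The obstruction is that $\phi=\tfrac{1}{c(d)}I_2\rho$ only enjoys $\nabla\phi\in L^2(\mathbb{R}^d,\mathbb{R}^d)$ and $\rho\nabla\phi\in L^1(\mathbb{R}^d,\mathbb{R}^d)$, and solves Poisson's equation merely in the distributional sense, so neither of the two integrations by parts is directly justified.

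\emph{Regularization.} Let $\rho_n\in C_c^\infty(\mathbb{R}^d)$, $n\in\mathbb{N}$, be such that $\rho_n\to\rho$ in $L^1(\mathbb{R}^d)\cap L^\gamma(\mathbb{R}^d)$ (for instance $\rho_n=(\rho\,\mathbf 1_{B_n(0)})*\omega_{1/n}$ with $(\omega_\delta)_{\delta>0}$ a standard mollifier), and set $\phi_n:=\tfrac{1}{c(d)}I_2\rho_n$. Then $\phi_n\in C^\infty(\mathbb{R}^d)$ solves $-\Delta\phi_n=\rho_n$ classically, $\nabla\phi_n(x)=\tfrac{2-d}{c(d)}\int_{\mathbb{R}^d}(x-y)\rho_n(y)|x-y|^{-d}\,dy$, and, since $\rho_n$ has compact support, one has the Newtonian decay $|\phi_n(x)|=O(|x|^{2-d})$, $|\nabla\phi_n(x)|=O(|x|^{1-d})$, $|\nabla^2\phi_n(x)|=O(|x|^{-d})$ as $|x|\to\infty$; in particular $\nabla\phi_n\in L^2\cap L^\infty$. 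For the smooth pair $(\rho_n,\phi_n)$ the computation above is rigorous once one first works on a ball $B_R(0)$: all bulk integrands lie in $L^1(\mathbb{R}^d)$ (since $\rho_n\in L^1\cap L^\infty$, $\nabla\phi_n\in L^2\cap L^\infty$, $\bar u,\nabla\bar u\in L^\infty$), and the boundary integrals over $\partial B_R(0)$ are $O\big(R^{d-1}\sup_{\partial B_R}|\nabla\phi_n|^2\,\|\bar u\|_\infty\big)=O(R^{1-d})\to0$. Letting $R\to\infty$ produces
\[
\int_{\mathbb{R}^d}\rho_n\nabla\phi_n\cdot\bar u\,dx = \int_{\mathbb{R}^d}\nabla\bar u:\nabla\phi_n\otimes\nabla\phi_n\,dx - \int_{\mathbb{R}^d}(\nabla\cdot\bar u)\tfrac12|\nabla\phi_n|^2\,dx .
\]

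\emph{Passage to the limit.} Since $1\le\tfrac{2d}{d+2}\le\tfrac{2d}{d+1}\le\gamma$ for $d\ge3$, the convergence $\rho_n\to\rho$ in $L^1\cap L^\gamma$ gives, by interpolation, $\rho_n\to\rho$ in $L^{2d/(d+2)}$ and in $L^{2d/(d+1)}$, with $\sup_n\|\rho_n\|_{2d/(d+1)}<\infty$. The representation of $\nabla\phi$ recalled in Section \ref{sectionEP} (applied to $\rho_n$ and to $\rho_n-\rho$) yields the pointwise bounds $|\nabla\phi_n-\nabla\phi|\le\tfrac{d-2}{c(d)}I_1|\rho_n-\rho|$ and $|\nabla\phi_n|\le\tfrac{d-2}{c(d)}I_1|\rho_n|$, so (\ref{Irho2}) with $\alpha=1$ gives $\|\nabla\phi_n-\nabla\phi\|_2\le C\|\rho_n-\rho\|_{2d/(d+2)}\to0$; combined with $\nabla\bar u\in L^\infty$ and $\big\||\nabla\phi_n|^2-|\nabla\phi|^2\big\|_1\le(\|\nabla\phi_n\|_2+\|\nabla\phi\|_2)\|\nabla\phi_n-\nabla\phi\|_2$, this handles the last two integrals. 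For the first, write $\rho_n\nabla\phi_n-\rho\nabla\phi=(\rho_n-\rho)\nabla\phi_n+\rho(\nabla\phi_n-\nabla\phi)$ and apply (\ref{etaIrho1bound}) with $\alpha=1$ (admissible since $\gamma\ge\tfrac{2d}{d+1}$) to the bounds $|(\rho_n-\rho)\nabla\phi_n|\le\tfrac{d-2}{c(d)}|\rho_n-\rho|\,I_1|\rho_n|$ and $|\rho(\nabla\phi_n-\nabla\phi)|\le\tfrac{d-2}{c(d)}|\rho|\,I_1|\rho_n-\rho|$, obtaining $\|\rho_n\nabla\phi_n-\rho\nabla\phi\|_1\le C\|\rho_n-\rho\|_{2d/(d+1)}\big(\|\rho_n\|_{2d/(d+1)}+\|\rho\|_{2d/(d+1)}\big)\to0$; since $\bar u\in L^\infty$, this gives $\int\rho_n\nabla\phi_n\cdot\bar u\to\int\rho\nabla\phi\cdot\bar u$. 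Passing to the limit in the displayed identity for $(\rho_n,\phi_n)$ yields (\ref{intparts2}).

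\emph{Main obstacle.} The algebra is elementary; the real work is the rigorous justification of the two integrations by parts when $\phi$ solves Poisson's equation only weakly and has merely an $L^2$ gradient. Truncating $\rho$ to compact support before mollifying is what makes the approximate potential $\phi_n$ decay fast enough to annihilate the boundary terms at infinity, while the quantitative Riesz bounds (\ref{Irho2}) and (\ref{etaIrho1bound}) are precisely what transfers the convergence $\rho_n\to\rho$ into convergence of $\nabla\phi_n$ in $L^2$ and of the quadratic quantities $\rho_n\nabla\phi_n$ and $|\nabla\phi_n|^2$ in $L^1$.
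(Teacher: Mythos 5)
Your proposal is correct and follows essentially the same route as the paper: approximate $\rho$ by $C_c^\infty$ densities, invoke the Maxwell stress-tensor identity for the smooth pair $(\rho_n,\phi_n)$, and pass to the limit via the Riesz-potential bounds (\ref{Irho2}) and (\ref{etaIrho1bound}) together with interpolation between $L^1$ and $L^\gamma$. The only cosmetic differences are that you spell out the vanishing of the boundary terms at infinity where the paper simply cites the divergence theorem, and you control $\||\nabla\phi_n|^2-|\nabla\phi|^2\|_1$ via (\ref{Irho2}) plus Cauchy--Schwarz rather than invoking (\ref{IetaIrho1}), which is the same estimate one layer earlier.
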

\begin{proof}
Set $p = \tfrac{2d}{d+2}, \ \gamma_0 = \tfrac{2d}{d+1}$ and note that $\rho \in L^1(\mathbb{R}^d) \cap L^{\gamma_0}(\mathbb{R}^d).$ Therefore, by Proposition \ref{density}, there exists a sequence $(\rho_n)_{n \in \mathbb{N}} \subseteq C_c^\infty(\mathbb{R}^d)$ such that $\rho_n \to \rho$ in $L^1(\mathbb{R}^d) \cap L^{\gamma_0}(\mathbb{R}^d). $ For each $n \in \mathbb{N},$ let $\phi_n = \frac{1}{c(d)}I_2 \rho_n,$ and note that, as a consequence of the divergence theorem \cite{willem}, $(\rho_n,\phi_n)$ satisfies 
\[\int_{\mathbb{R}^d} \rho_n \nabla \phi_n \cdot \bar u \ dx = \int_{\mathbb{R}^d} \nabla \bar u : \nabla \phi_n \otimes \nabla \phi_n \ dx - \int_{\mathbb{R}^d} (\nabla \cdot \bar u) \tfrac{1}{2}|\nabla \phi_n|^2 \ dx \ . \]
Next, one wishes to pass to the limit in the above expression. Using (\ref{etaIrho1bound}) one deduces that 
\begin{equation*}
\begin{split}
\Big| \int_{\mathbb{R}^d} \rho \nabla \phi \cdot \bar u  \ dx - \int_{\mathbb{R}^d}\rho_n \nabla \phi_n \cdot \bar u \ dx \Big| & \leq ||\bar u||_\infty \int_{\mathbb{R}^d} |(\rho-\rho_n) \nabla \phi|+|\rho_n \nabla (\phi_n - \phi)| \ dx \\
& \leq C \big(||(\rho - \rho_n) I_1 \rho ||_1 + ||\rho_n I_1( \rho_n - \rho) ||_1 \big) \\
& \leq C \big(||\rho - \rho_n ||_{\gamma_0}||\rho ||_{\gamma_0}+||\rho_n ||_{\gamma_0}|| \rho_n-\rho||_{\gamma_0} \big) \\
& \to 0 \ \text{as} \ n \to \infty \ . 
\end{split}
\end{equation*}
Moreover, using (\ref{IetaIrho1}) together with the interpolation inequality one has that
\begin{equation*}
\begin{split}
\Big| \int_{\mathbb{R}^d} \nabla \bar u : \nabla \phi \otimes \nabla \phi  \ dx - \int_{\mathbb{R}^d} \nabla \bar u : \nabla \phi_n \otimes \nabla \phi_n \ dx \Big| & \leq ||\nabla \bar u||_\infty \int_{\mathbb{R}^d} |\nabla (\phi-\phi_n)||\nabla \phi|+|\nabla \phi_n||\nabla (\phi - \phi_n)| dx \\
& \leq C \big(||I_1(\rho - \rho_n) I_1 \rho ||_1 + ||(I_1 \rho_n) I_1( \rho_n - \rho) ||_1 \big) \\
& \leq C \big(||\rho - \rho_n ||_p||\rho ||_p+||\rho_n ||_p|| \rho_n-\rho||_p \big) \\
& \to 0 \ \text{as} \ n \to \infty \ ,
\end{split}
\end{equation*}
and 
\begin{equation*}
\begin{split}
\Big| \int_{\mathbb{R}^d} (\nabla \cdot \bar u) \tfrac{1}{2}|\nabla \phi|^2  \ dx - \int_{\mathbb{R}^d}(\nabla \cdot \bar u) \tfrac{1}{2}|\nabla \phi_n|^2 \ dx \Big| & \leq \frac{||\nabla \cdot \bar u||_\infty}{2} \int_{\mathbb{R}^d} \big| |\nabla \phi|^2-|\nabla \phi_n|^2 \big| \ dx \\
& \leq C ||I_1(\rho - \rho_n) I_1 (\rho+\rho_n) ||_1  \\
& \leq C ||\rho - \rho_n ||_p||\rho +\rho_n||_p \\
& \to 0 \ \text{as} \ n \to \infty \ .
\end{split}
\end{equation*}
 The result follows.
\end{proof}

\subsection{Relative energy inequality} 

The first step in order to obtain expression (\ref{mainstability}) is to derive a relative energy inequality satisfied by the relative energy $\Psi.$ The terms involving the electric potential $\phi$ are handled using the first integration by parts formula of the previous subsection. Precisely, one has:

\begin{proposition} \label{propREinequality}
Let $(\rho,\rho u),$ with $\phi = \tfrac{1}{c(d)}I_2\rho,$ be a dissipative weak solution of (\ref{EP}) with $\gamma \geq \frac{2d}{d+2}$, and let $(\bar{\rho},\bar{\rho} \bar{u}),$ with $\bar{\phi}= \tfrac{1}{c(d)}I_2\bar \rho,$ be a strong solution of (\ref{EP}). Then, for each $t \in [0,T[$, the relative energy $\Psi$ between these two solutions satisfies
\begin{equation} \label{REinequality}
\Psi(t)-\Psi(0)  \leq \mathcal{J}_1(t) + \mathcal{J}_2(t) + \mathcal{J}_3(t) \ ,
\end{equation}
where
\begin{equation*}
\begin{split}
\mathcal{J}_1(t) & = - \int_0^t \int_{\mathbb{R}^d} \nabla \bar{u}: \rho (u - \bar{u}) \otimes (u - \bar{u}) \ dxd \tau \ , \\
\mathcal{J}_2(t) & = -  \int_0^t \int_{\mathbb{R}^d} (\nabla \cdot \bar{u}) p(\rho | \bar{\rho}) \  dx d\tau \ , \\
\mathcal{J}_3(t) & =  \int_0^t \int_{\mathbb{R}^d} (\rho - \bar{\rho})\bar{u}\cdot \nabla (\phi - \bar{\phi}) \  dx d\tau \ . 
\end{split}
\end{equation*}
\end{proposition}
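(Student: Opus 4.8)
The plan is to exploit the algebraic structure of the relative energy $\Psi$ in (\ref{relenergyfunct}). Writing
\[
\Psi(t) - \Psi(0) = \big[\mathcal{H}(t) - \mathcal{H}(0)\big] - \big[\bar{\mathcal{H}}(t) - \bar{\mathcal{H}}(0)\big] - \big[\mathcal{A}(t) - \mathcal{A}(0)\big] - \big[\mathcal{B}(t) - \mathcal{B}(0)\big],
\]
where $\mathcal{A}(\tau) = \langle \tfrac{\delta \bar{\mathcal{H}}}{\delta \rho}(\tau),\, \rho(\tau) - \bar\rho(\tau)\rangle$ and $\mathcal{B}(\tau) = \langle \tfrac{\delta \bar{\mathcal{H}}}{\delta (\rho u)}(\tau),\, \rho(\tau)u(\tau) - \bar\rho(\tau)\bar u(\tau)\rangle$, I would treat the four brackets in turn. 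The first is controlled by the dissipation inequality: testing (\ref{weakdissip}) with the Lipschitz cutoff $\theta^t_\kappa$ from (\ref{thetatest}) and letting $\kappa \to 0^+$ gives $\mathcal{H}(t) \leq \mathcal{H}(0)$, exactly as in the derivation of (\ref{weakdiss}), so this bracket is $\leq 0$. The second bracket vanishes, since, as recorded after (\ref{stronginitialbounds}), the strong solution conserves its energy, $\bar{\mathcal{H}}(t) = \bar{\mathcal{H}}(0)$.

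For the remaining two brackets the plan is to convert the time differences into time integrals. By condition (iii) in the definition of strong solution, the maps $\tau \mapsto \theta^t_\kappa(\tau)\tfrac{\delta \bar{\mathcal{H}}}{\delta \rho}(\tau)$ and $\tau \mapsto \theta^t_\kappa(\tau)\tfrac{\delta \bar{\mathcal{H}}}{\delta (\rho u)}(\tau)$ are admissible test functions for (\ref{weak1}) and (\ref{weak2}) --- Lipschitz, compactly supported in time, vanishing as $|x|\to\infty$ --- so inserting them (with $\theta^t_\kappa(0)=1$) encodes the evolution of the $\rho$- and $\rho u$-factors of $\mathcal{A}$ and $\mathcal{B}$. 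The evolution of the remaining factors $\bar\rho$, $\bar\rho\bar u$ and of the variational derivatives $\tfrac{\delta \bar{\mathcal{H}}}{\delta \rho} = -\tfrac12|\bar u|^2 + h^\prime(\bar\rho) + \bar\phi$ and $\tfrac{\delta \bar{\mathcal{H}}}{\delta (\rho u)} = \bar u$ is supplied by the strong equations (\ref{EPstrong}). Passing $\kappa \to 0^+$ --- which is legitimate because $\rho$, $\rho u$, $\sqrt\rho\, u$ are continuous in time in the spaces of Definition \ref{weakformulation}, while $\phi$, $\nabla\phi$ and $\rho\nabla\phi$ inherit time-continuity from Propositions \ref{rieszCL2} and \ref{rhoIrhoCL1} --- yields identities of the form $\mathcal{A}(t) - \mathcal{A}(0) = \int_0^t (\cdots)\,d\tau$ and $\mathcal{B}(t) - \mathcal{B}(0) = \int_0^t (\cdots)\,d\tau$.

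It then remains to collect the two integrands and simplify, retracing the formal computation behind (\ref{formalRE}). This step uses the thermodynamic identities (\ref{thermoident}) and the definitions of $h(\rho\,|\,\bar\rho)$ and $p(\rho\,|\,\bar\rho)$, and, for the potential part, rewrites every cross term built from $\phi$ and $\bar\phi$, such as $\int_{\mathbb{R}^d} \nabla\phi\cdot\nabla\bar\phi\,dx$, $\int_{\mathbb{R}^d} \bar\rho\,\phi\,dx$ and $\int_{\mathbb{R}^d} \rho\,\bar\phi\,dx$, by means of the integration by parts formula (\ref{intparts}), so that the potential contributions reassemble into the relative term $\tfrac12|\nabla(\phi-\bar\phi)|^2$ carried by $\Psi$ and into $\int_{\mathbb{R}^d} (\rho - \bar\rho)\bar u \cdot \nabla(\phi - \bar\phi)\,dx$. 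After the cancellations one should be left with exactly $-[\mathcal{A}(t)-\mathcal{A}(0)] - [\mathcal{B}(t)-\mathcal{B}(0)] = \mathcal{J}_1(t) + \mathcal{J}_2(t) + \mathcal{J}_3(t)$, and combining this with $\mathcal{H}(t)-\mathcal{H}(0)\leq 0$ and $\bar{\mathcal{H}}(t)-\bar{\mathcal{H}}(0)=0$ gives (\ref{REinequality}).

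The step I expect to be the main obstacle is, first, ensuring that every integral produced along the way is finite and that the limits $\kappa\to0^+$ genuinely commute with the spatial integrations; this is exactly where the regularity of the dissipative weak solution and the time-continuity of the Riesz potentials $\phi$, $\nabla\phi$, $\rho\nabla\phi$ (Propositions \ref{rieszCL2} and \ref{rhoIrhoCL1}) are used. Second, the potential cross terms cannot be handled by a naive integration by parts, since $\nabla\phi$ is known only to lie in $L^2$ and $\rho$ only in $L^1\cap L^\gamma$; one must therefore systematically replace such manipulations by the identity (\ref{intparts}), which is itself justified through the theory of Riesz potentials.
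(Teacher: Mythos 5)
Your proposal is correct and follows essentially the same path as the paper: bound the weak energy difference by dissipativity, cancel the strong energy difference by conservation, and convert the variational-derivative terms into time integrals by inserting the test functions $\theta^t_\kappa \tfrac{\delta\bar{\mathcal H}}{\delta\rho}$ and $\theta^t_\kappa \tfrac{\delta\bar{\mathcal H}}{\delta(\rho u)}$ into the weak formulation (for the difference of solutions), letting $\kappa\to 0^+$, and rearranging via the strong equations, the thermodynamic identities, and the integration by parts formula (\ref{intparts}). The paper organizes this by first writing the weak formulation for the pair $(\rho-\bar\rho,\rho u - \bar\rho\bar u)$ rather than tracking $\mathcal A$ and $\mathcal B$ separately, but this is the same argument.
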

\begin{proof}
First recall that the energy a dissipative weak solution of (\ref{EP}) satisfies
\begin{equation} \label{weakREproof}
\mathcal{H}(t) \leq \mathcal{H}(0) \ ,
\end{equation} 
 while the energy of a strong solution of (\ref{EP}) satisfies 
 \begin{equation} \label{strongREproof}
 \bar{ \mathcal{H}}(t) = \bar{\mathcal{H}}(0) \ ,
 \end{equation} 
for every $t \in [0,T[.$ Next, one considers the difference $(\rho - \bar{\rho},\rho u - \bar{\rho} \bar{u}) $ between a weak and a strong solutions of (\ref{EP}):
\begin{equation*}
         -\int_0^T \int_{\mathbb{R}^d} \partial_t \varphi (\rho - \bar{\rho}) \ dxdt-\int_0^T \int_{\mathbb{R}^d} \nabla \varphi \cdot (\rho u - \bar{\rho} \bar{u}) \ dxdt - \int_{\mathbb{R}^d} \varphi (\rho - \bar{\rho}) \big|_{t=0} \ dx=0 \ ,
\end{equation*} 
\begin{equation*}
        \begin{split}
        -&  \int_0^T \int_{\mathbb{R}^d} \partial_t \tilde{\varphi} \cdot (\rho u- \bar{\rho} \bar{u}) \ dxdt -  \int_0^T \int_{\mathbb{R}^d} \nabla \tilde{\varphi} : (\rho u \otimes u - \bar{\rho} \bar{u} \otimes \bar{u}) \ dx dt \\
        &-  \int_0^T \int_{\mathbb{R}^d} (\nabla \cdot \tilde{\varphi}) \big(p(\rho)-p(\bar{\rho}) \big) \ dxdt
        - \int_{\mathbb{R}^d} \tilde{\varphi} \cdot (\rho u - \bar{\rho} \bar{u})\big|_{t=0} \ dx \\
        =& \  -  \int_0^T \int_{\mathbb{R}^d} \tilde{\varphi} \cdot (\rho \nabla \phi - \bar{\rho} \nabla \bar{\phi}) \ dxdt \ ,
        \end{split}
\end{equation*} 
for all Lipschitz test functions $\varphi: [0,T[ \times \mathbb{R}^d \to \mathbb{R}, \ \tilde{\varphi}:[0,T[ \times \mathbb{R}^d \to \mathbb{R}^d$ compactly supported in time and having for each $t \in [0,T[$ the following limiting behaviour at infinity 
\[\lim\limits_{|x| \to \infty} |\tilde{\varphi}(t,x)| = 0 \ .\]
Using as test functions the functions $\varphi$ and $\tilde{\varphi}$ given by 
\[\varphi = \theta \frac{\delta \bar{\mathcal{H}}}{\delta \rho} \ , \ \ \ \tilde{\varphi}=\theta \frac{\delta \bar{\mathcal{H}}}{\delta(\rho u)} \ ,\]
where $\theta$ is given by (\ref{thetatest}), after letting $\kappa \to 0^+$ one obtains

\begin{equation} \label{weakdiff1}
    \begin{split}
      & \int_{\mathbb{R}^d} \frac{\delta \bar{\mathcal{H}}}{\delta \rho}(\rho - \bar{\rho})\Big|_{\tau = 0}^{\tau = t} \ dx  -\int_0^t \int_{\mathbb{R}^d} \partial_\tau \frac{\delta \bar{\mathcal{H}}}{\delta \rho}(\rho - \bar{\rho}) \ dxd\tau -  \int_0^t \int_{\mathbb{R}^d} \nabla \frac{\delta \bar{\mathcal{H}}}{\delta \rho} \cdot (\rho u - \bar{\rho} \bar{u}) \ dxd\tau =  0 \ ,
    \end{split}
\end{equation}
\begin{equation} \label{weakdiff2}
    \begin{split}
       & \int_{\mathbb{R}^d} \big( \frac{\delta \bar{\mathcal{H}}}{\delta(\rho u)} \cdot (\rho u - \bar{\rho} \bar{u}) \big) \Big|_{\tau = 0}^{\tau = t} \ dx -  \int_0^t \int_{\mathbb{R}^d} \partial_\tau \frac{\delta \bar{\mathcal{H}}}{\delta(\rho u)} \cdot (\rho u - \bar{\rho} \bar{u}) \ dxd\tau \\
       & -   \int_0^t \int_{\mathbb{R}^d} \nabla \frac{\delta \bar{\mathcal{H}}}{\delta(\rho u)} : (\rho u \otimes u - \bar{\rho} \bar{u} \otimes \bar{u}) \ dxd\tau- \int_0^t \int_{\mathbb{R}^d} \nabla \cdot \frac{\delta \bar{\mathcal{H}}}{\delta(\rho u)}\big(p(\rho)-p(\bar{\rho})\big) \ dxd\tau \\
        = & - \int_0^t \int_{\mathbb{R}^d} \frac{\delta \bar{\mathcal{H}}}{\delta(\rho u)} \cdot (\rho \nabla \phi - \bar{\rho} \nabla \bar{\phi}) \ dxd\tau \ .
  \end{split}
\end{equation} 
The desired identity is then obtained by subtracting (\ref{strongREproof}), (\ref{weakdiff1}) and (\ref{weakdiff2}) from (\ref{weakREproof}), together with a meticulous rearrangement of the terms that is done using the continuity equation $\partial_t \bar{\rho}+\nabla \cdot (\bar{\rho} \bar{u}) = 0,$ the equation $\partial_t \bar u + \bar u \cdot \nabla \bar u = -\nabla \big(h^\prime(\bar \rho) + \bar \phi \big),$ and the second equality of (\ref{intparts}). For a detailed presentation of the calculations involved refer to \cite{alves, GLT, gasdynamics}.
\end{proof}
\subsection{Bounds in terms of the relative energy} 
In this section one shows that 
\begin{equation}
\mathcal{J}_i(t) \leq C \int_0^t \Psi(\tau) \ d\tau \ , \quad i=1,2,3 \ ,
\end{equation}
for some positive constant $C$ depending only on the dimension $d,$ on the adiabatic exponent $\gamma$ and on the $L^\infty$ norm of $\bar u$ and its derivatives, under the same hypothesis of Theorem \ref{mainstabilitythm}. Then Gronwall's inequality will imply (\ref{mainstability}). \par The first two terms are treated as follows,
\begin{equation*}
  \begin{split}
    \mathcal{J}_1(t) &=  - \int_0^t \int_{\mathbb{R}^d} \nabla \bar{u}: \rho (u - \bar{u}) \otimes (u - \bar{u}) \ dxd \tau \\
   & \leq  C \ ||\nabla \bar{u} ||_{\infty} \int_0^t \int_{\mathbb{R}^d}  \rho |u - \bar{u}|^2 \ dxd\tau \\
   & \leq C \int_0^t \Psi(\tau) \ d\tau \ ,
  \end{split}
\end{equation*} \par 
\begin{equation*}
  \begin{split}
    \mathcal{J}_2(t) &=  - \int_0^t \int_{\mathbb{R}^d} (\nabla \cdot \bar{u}) p(\rho | \bar{\rho}) \ dxd \tau \\
   &  \leq ||\nabla \cdot \bar{u} ||_{\infty} (\gamma - 1) \int_0^t \int_{\mathbb{R}^d} h(\rho | \bar{\rho}) \ dxd\tau \\
   & \leq C \int_0^t \Psi(\tau) \ d\tau \ ,
  \end{split}
\end{equation*} \par 
whereas for the third term one uses the integration by parts formula (\ref{intparts2}) to deduce that
\begin{equation*}
  \begin{split}
    \mathcal{J}_3(t) &=  \int_0^t \int_{\mathbb{R}^d} (\rho - \bar{\rho})\bar{u}\cdot \nabla (\phi - \bar{\phi}) \ dx d\tau \\ 
    & =     \int_0^t  \int_{\mathbb{R}^d} \nabla \bar u : \nabla (\phi - \bar \phi) \otimes \nabla (\phi  - \bar \phi) \ dx d\tau - \int_0^t \int_{\mathbb{R}^d} (\nabla \cdot \bar u) \tfrac{1}{2}|\nabla (\phi-\bar \phi)|^2 \ dx  d\tau \\
    & \leq C \ (||\nabla \bar u ||_\infty+ ||\nabla \cdot \bar u ||_\infty) \int_0^t \int_{\mathbb{R}^d}  |\nabla (\phi-\bar \phi)|^2 \ dx  d\tau \\
    & \leq C \int_0^t \Psi(\tau) \ d\tau \ ,
  \end{split}
\end{equation*}
as desired.
\appendix
\section*{Appendix}
\setcounter{equation}{0}
\setcounter{theorem}{0}
\renewcommand\thesection{A}
Here it is proved that $C_c^\infty(\mathbb{R}^d))$ is dense in $L^1(\mathbb{R}^d) \cap L^\gamma(\mathbb{R}^d).$ This result is used in the proof of Proposition \ref{intparts2prop}.
\begin{proposition} \label{density}
 Let $f \in L^1(\mathbb{R}^d) \cap L^\gamma(\mathbb{R}^d)$ with $\gamma > 1.$ Given $\varepsilon > 0$ there exists $\varphi \in C^\infty_c(\mathbb{R}^d)$ such that 
 $$||f-\varphi||_1+||f-\varphi||_\gamma < \varepsilon \ .$$
\end{proposition}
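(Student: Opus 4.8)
The plan is to reduce the claim to the well-known density of $C^\infty_c(\mathbb{R}^d)$ in each of $L^1$ and $L^\gamma$ separately, and then to run a careful two-stage approximation in which the \emph{same} function serves both purposes. The obstacle is that the standard density statements produce, a priori, \emph{different} approximants for the $L^1$ and $L^\gamma$ topologies, so one cannot simply invoke them in parallel; the point is to organize the approximation so that truncation and mollification are controlled simultaneously in both norms.

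First I would reduce to the case of a compactly supported bounded function. Given $f \in L^1 \cap L^\gamma$ and $\varepsilon > 0$, set $f_{R,N} = \bigl(f \cdot \mathds{1}_{\{|x| \le R\}}\bigr) \wedge N \vee (-N)$, i.e.\ the restriction of $f$ to the ball $B_R(0)$ truncated at height $N$. By dominated convergence $f_{R,N} \to f$ in $L^1$ as $R, N \to \infty$ (dominated by $|f| \in L^1$), and likewise $f_{R,N} \to f$ in $L^\gamma$ (dominated by $|f|^\gamma \in L^1$). Hence one can choose $R$ and $N$ large enough that $\|f - f_{R,N}\|_1 + \|f - f_{R,N}\|_\gamma < \varepsilon/2$. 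Write $g = f_{R,N}$; then $g \in L^\infty(\mathbb{R}^d)$ with $\|g\|_\infty \le N$ and $\supp g \subseteq \overline{B_R(0)}$, so in particular $g \in L^2(\mathbb{R}^d)$ with compact support.

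Next I would mollify. Let $(\psi_\delta)_{\delta > 0}$ be a standard mollifier and put $\varphi_\delta = g * \psi_\delta \in C^\infty_c(\mathbb{R}^d)$ (the support is contained in a fixed ball $\overline{B_{R+1}(0)}$ once $\delta \le 1$). It is classical that $\varphi_\delta \to g$ in $L^p$ for every $p \in [1,\infty)$; in particular $\varphi_\delta \to g$ in $L^1$ and in $L^\gamma$. Choose $\delta \le 1$ small enough that $\|g - \varphi_\delta\|_1 + \|g - \varphi_\delta\|_\gamma < \varepsilon/2$, and set $\varphi = \varphi_\delta$. Then $\varphi \in C^\infty_c(\mathbb{R}^d)$ and, by the triangle inequality,
\[
\|f - \varphi\|_1 + \|f - \varphi\|_\gamma \le \bigl(\|f - g\|_1 + \|f - g\|_\gamma\bigr) + \bigl(\|g - \varphi\|_1 + \|g - \varphi\|_\gamma\bigr) < \varepsilon \ ,
\]
which is the asserted estimate.

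**The main obstacle** I anticipate is purely bookkeeping: one must be sure the same pair $(R,N)$ makes \emph{both} truncation errors small and the same $\delta$ makes \emph{both} mollification errors small. This is automatic here because in each of the two reduction steps the approximation holds in $L^p$ for \emph{all} finite $p$ simultaneously (monotone/dominated convergence for the truncation, the general $L^p$-mollification theorem for the smoothing), so one simply intersects the finitely many conditions. No quantitative rate is needed, only that finitely many "choose the parameter large/small enough" statements can be imposed at once, which is trivial. Thus the proof is a routine two-step truncate-then-mollify argument, and the only care required is to keep track of both norms in tandem throughout.
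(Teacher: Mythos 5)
Your proof is correct, and it takes a genuinely different (and somewhat more streamlined) route than the paper's. The paper reduces to $f \ge 0$, approximates by an increasing sequence of simple functions with finite-measure support, invokes Lusin's theorem to replace the chosen simple function by a $C_c(\mathbb{R}^d)$ function $g$, and only then mollifies; the compact support of $g$ comes from Lusin. You instead truncate $f$ directly in height and in space via $f_{R,N}$, which already produces a bounded, compactly supported $L^\infty$ function, and then mollify. This bypasses both the simple-function approximation and Lusin's theorem entirely, replacing them with a single application of dominated convergence (the pointwise bound $|f - f_{R,N}| \le |f|$ makes this immediate in both norms simultaneously). The paper's argument verifies the $L^p$-convergence of the mollification by hand (uniform convergence on a fixed compact plus a measure bound), while you invoke the general $L^p$-mollification theorem; these are interchangeable. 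Both routes correctly handle the key point you flag — that a single choice of parameters must control both the $L^1$ and $L^\gamma$ errors — and your truncate-then-mollify version is the shorter of the two.
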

\begin{proof}
 Let $f \in L^1(\mathbb{R}^d) \cap L^\gamma(\mathbb{R}^d)$ and $\varepsilon > 0 .$ Any measurable function can be written as the difference between two non-negative measurable functions, so one can assume that $f \geq 0.$  Set $$S = \{s:\mathbb{R}^d \to \mathbb{R} \ \text{measurable simple function} \ | \ \mathcal{L}(\{ x \in \mathbb{R}^d \ | \ s(x) \neq 0 \}) < \infty \} \ . $$
 Given that $f$ is measurable, there exists a sequence of measurable simple functions $(s_n)_{n \in \mathbb{N}}$ such that $0 \leq s_1 \leq s_2 \leq \ldots \leq f$ and $s_n(x) \to f(x) \ \forall x \in \mathbb{R}^d.$ Since $f \in L^1(\mathbb{R}^d) \cap L^\gamma(\mathbb{R}^d),$ then $(s_n)_{n \in \mathbb{N}} \subseteq L^1(\mathbb{R}^d) \cap L^\gamma(\mathbb{R}^d)$ whence
 $(s_n)_{n \in \mathbb{N}} \subseteq S.$ Let $h_n = |f-s_n|+|f-s_n|^\gamma$ and observe that $|h_n| \leq 2|f|+2^\gamma |f|^\gamma \in L^1(\mathbb{R}^d),$ $h_n(x) \to 0 \ \forall x \in \mathbb{R}^d.$ Consequently, by Lebesgue's dominated convergence theorem,
 $$||f-s_n ||_1 + ||f-s_n ||_\gamma^\gamma = \int_{\mathbb{R}^d}h_n \ dx \to 0 \ . $$
 Hence, there exists $s \in S$ such that 
 $$ ||f-s ||_1 + ||f-s||_\gamma < \frac{\varepsilon}{4} \ .$$ Let $\tilde{\varepsilon}>0$ be such that $2||s||_\infty(\tilde{\varepsilon}+\tilde{\varepsilon}^{1/\gamma}) < \varepsilon/4 \ .$
 By Lusin's theorem there exists $g \in C_c(\mathbb{R}^d)$ such that $\mathcal{L}(\{x \in \mathbb{R}^d \ | \ g(x) \neq s(x) \}) < \tilde{\varepsilon}$ and $|g| \leq ||s||_\infty \ .$ Thus,
 \begin{equation*}
  \begin{split}
   ||g-s||_1+||g-s||_\gamma & = \int_{\mathbb{R}^d}|g-s| \ dx+\Big(\int_{\mathbb{R}^d}|g-s|^\gamma  \ dx \Big)^{1/\gamma} \\
   & = \int_{\{g \neq s\}}|g-s| \ dx+\Big(\int_{\{g \neq s\}}|g-s|^\gamma  \ dx \Big)^{1/\gamma} \\
   & < 2\tilde{\varepsilon}||s||_\infty+2\tilde{\varepsilon}^{1/\gamma}||s||_\infty \\
   & < \frac{\varepsilon}{4} \ ,
   \end{split}
\end{equation*}
so 
\begin{equation*}
  \begin{split}
   ||f-g||_1+||f-g||_\gamma & \leq  ||f-s||_1+||s-g||_1 + ||f-s||_\gamma+||s-g||_\gamma\\
   & < \varepsilon/4+\varepsilon/4 \\
   & = \frac{\varepsilon}{2} \ . 
 \end{split}
\end{equation*}
Now let $\eta$ be the standard mollifier and set $g_\delta = \eta_\delta * g,$ for $0 < \delta < 1 \ . $ Since $g$ is continuous, $g_\delta \to g$ uniformly on all compact subsets of $\mathbb{R}^d,$ as $\delta \to 0.$ Moreover, $g$ has compact support so there exists $r > 0$ such that $\supp(g) \ \subseteq \ B_r(0),$ hence 
\[\supp(g_\delta) \  \subseteq \ \supp(g)+\overline{B_\delta(0)} \subseteq B_r(0)+\overline{B_\delta(0)} \  \subseteq \ \overline{B_{r+\delta}(0)} \ \subseteq \ \overline{B_{r+1}(0)} \eqqcolon K \ .\]
Observe that 
\begin{equation*}
  \begin{split}
   ||g_\delta-g||_1+||g_\delta-g||_\gamma & =  \int_{\mathbb{R}^d}|g_\delta-g| \ dx+\Big(\int_{\mathbb{R}^d}|g_\delta-g|^\gamma \ dx \Big)^{1/\gamma} \\
   & = \int_K |g_\delta-g| \ dx+\Big(\int_K |g_\delta-g|^\gamma \ dx \Big)^{1/\gamma} \\
   & \leq \sup\limits_K |g_\delta-g| \big(\mathcal{L}(K)+\mathcal{L}(K)^{1/\gamma}\big) \ .
    \end{split}
\end{equation*}
Given that $\sup\limits_K |g_\delta-g| \to 0$ as $\delta \to 0,$ there exists $0 < \tilde{\delta} < 1$ such that 
\[\sup\limits_K |g_\delta-g| < \frac{\varepsilon}{2 \big(\mathcal{L}(K)+\mathcal{L}(K)^{1/\gamma} \big)} \ \ \ \text{whenever} \ \delta < \tilde{\delta} \ .\]
Let $\delta_0 = \tilde{\delta}/2$ and set $\varphi \coloneqq g_{\delta_0} \in C_c^\infty(\mathbb{R}^d).$ Finally, one gets
\begin{equation*}
  \begin{split}
   ||f-\varphi||_1+||f-\varphi||_\gamma & \leq  ||f-g||_1+||g-\varphi||_1 + ||f-g||_\gamma+||g-\varphi||_\gamma\\
   & < \varepsilon/2+\varepsilon/2 \\
   & = \varepsilon \ ,
 \end{split}
\end{equation*}
as desired.
\end{proof}

\section*{Acknowledgments}
The author would like to thank Professor Athanasios Tzavaras for many helpful conversations.

\end{document}